\patchcmd{\section}{\scshape}{\bfseries}{}{} \makeatletter
\renewcommand{\@secnumfont}{\bfseries} \makeatother
\theoremstyle{definition}
\theoremstyle{plain} \newtheorem{theorem}{Theorem}[section]
\theoremstyle{plain} \newtheorem{lemma}[theorem]{Lemma}
\theoremstyle{plain} \newtheorem{proposition}[theorem]{Proposition}
\theoremstyle{plain} \newtheorem{conjecture}[theorem]{Conjecture}
\theoremstyle{plain} \newtheorem{corollary}[theorem]{Corollary}
\theoremstyle{remark} \newtheorem*{remark}{Remark}
\theoremstyle{definition} 
\theoremstyle{definition} \newtheorem*{definition*}{Definition}
\theoremstyle{definition} \newtheorem{question}[theorem]{Question}
\theoremstyle{remark} 
\makeatletter \renewenvironment{proof}[1][\proofname]
{\par\pushQED{\qed}\normalfont\topsep6\p@\@plus6\p@\relax\trivlist\item[\hskip\labelsep\bfseries#1\@addpunct{.}]\ignorespaces}{\popQED\endtrivlist\@endpefalse}
\newcommand{\EE}{\mathbb{E}}
\newcommand{\PP}{\mathbb{P}}
\newcommand{\RR}{\mathbb{R}}
\newcommand{\QQ}{\mathbb{Q}}
\newcommand{\NN}{\mathbb{N}}
\newcommand{\ZZ}{\mathbb{Z}}
\newcommand{\calA}{\mathcal{A}}
\newcommand{\calF}{\mathcal{F}}
\newcommand{\calP}{P}
\newcommand{\calW}{W}
\renewcommand{\leq}{\leqslant} \renewcommand{\geq}{\geqslant}
\providecommand{\ceil}[1]{\lceil#1\rceil}
\providecommand{\Abs}[1]{\left|#1\right|}
\DeclarePairedDelimiter{\abs}{\lvert}{\rvert}
\DeclarePairedDelimiter{\floor}{\lfloor}{\rfloor}
\DeclarePairedDelimiter{\set}{\lbrace}{\rbrace}
\DeclarePairedDelimiter{\parens}{\lparen}{\rparen}
\DeclarePairedDelimiter{\brackets}{\lbrack}{\rbrack}
\def\real{{x}}
\def\1int{{[0,1]}}
\title[Khintchine's Theorem with random fractions]{Khintchine's
  Theorem with random fractions} 
\author[F.~A.~Ram{\'i}rez]{Felipe
  A.~Ram{\'i}rez} 
\keywords{Khintchine's Theorem, Duffin--Schaeffer
  Conjecture, Catlin's Conjecture} 
\address{Wesleyan University,
  Middletown CT, USA} 
\email{framirez@wesleyan.edu}
\dedicatory{For Luna Luc{\'i}a.}
\begin{document}


\begin{abstract}
  We prove versions of Khintchine's Theorem (1924) for approximations
  by rational numbers whose numerators lie in randomly chosen sets of
  integers, and we explore the extent to which the monotonicity
  assumption can be removed. Roughly speaking, we show that if the
  number of available fractions for each denominator grows too fast,
  then the monotonicity assumption cannot be removed. There are
  questions in this random setting which may be seen as cognates of
  the Duffin--Schaeffer Conjecture (1941), and are likely to be more
  accessible. We point out that the direct random analogue of the
  Duffin--Schaeffer Conjecture, like the Duffin--Schaeffer Conjecture
  itself, implies Catlin's Conjecture (1976). It is not obvious
  whether the Duffin--Schaeffer Conjecture and its random version
  imply one another, and it is not known whether Catlin's Conjecture
  implies either of them. The question of whether Catlin implies
  Duffin--Schaeffer has been unsettled for decades.
\end{abstract}


\maketitle
\thispagestyle{empty}

\setcounter{tocdepth}{1} 
\tableofcontents

\section{Introduction and results}
\label{sec:altint}

Let $\Psi:\NN\to\RR_{\geq 0}$ be some function, and for $n\in\NN$,
let $[n]:=\{1, \dots, n\}$. In \emph{metric Diophantine approximation}
we are concerned with properties of the set
\begin{equation*}
  \calW(\Psi) := \set*{\real\in [0,1] : \abs*{\real - \frac{a}{n}}<\Psi(n) \textrm{ for infinitely many } n\in\NN, a\in [n]},
\end{equation*}
the \emph{$\Psi$-approximable numbers}. One of the foundational results
in this area is Khintchine's Theorem.

\theoremstyle{plain} \newtheorem*{kt}{Khintchine's Theorem}
\begin{kt}[\cite{Khintchineonedimensional}, 1924]
  If $\Psi : \NN \to \RR_{\geq 0}$ is non-increasing, then
  \begin{equation*}
    \lambda\parens*{\calW (\Psi)} = 
    \begin{cases}
      1 &\textrm{if }\sum_{n=1}^\infty n\Psi(n) =\infty \\
      0 &\textrm{if }\sum_{n=1}^\infty n\Psi(n) < \infty
    \end{cases}
  \end{equation*}
  where $\lambda$ is Lebesgue measure.
\end{kt}

\begin{remark}
  Actually, Khintchine originally proved this with the stronger
  assumption that $n^2 \Psi(n)$ is non-increasing. 
\end{remark}

It is a fact, proved by Duffin and Schaeffer in 1941, that
Khintchine's Theorem becomes false if one removes the monotonicity
assumption~\cite{duffinschaeffer}. However, there is a long-standing
conjecture that if we only allow approximations by reduced fractions,
then a modified version of Khintchine's Theorem holds without the need
for monotonicity.  For each $n\in\NN$, let $[n]'$ denote the elements
of $[n]$ which are co-prime to $n$, and consider
\begin{equation*}
  \calW'(\Psi) := \left\{\real\in [0,1] : \Abs{\real - \frac{a}{n}}<\Psi(n) \textrm{ for infinitely many } n\in\NN,  a\in[n]'\right\},
\end{equation*}
the numbers that are \emph{$\Psi$-approximable by reduced
  fractions}. Here is the conjecture.

\theoremstyle{plain} \newtheorem*{dsconj}{Duffin--Schaeffer
  Conjecture}
\begin{dsconj}[\cite{duffinschaeffer}, 1941]
  For any $\Psi:\NN\to \RR_{\geq 0}$,
  \begin{equation*}
    \lambda\parens*{\calW' (\Psi)} = 
    \begin{cases}
      1 &\textrm{if }\sum_{n=1}^\infty \varphi(n)\Psi(n) =\infty \\
      0 &\textrm{if }\sum_{n=1}^\infty \varphi(n)\Psi(n) < \infty
    \end{cases}
  \end{equation*}
  where $\varphi(n) := \#[n]'$ is Euler's totient function.
\end{dsconj}

The Duffin--Schaeffer Conjecture remains open, and is one of the most
pursued problems in metric number theory. The difficulty in it arises
from the fact that the sets $W'(\Psi)$---and also $W(\Psi)$---are
$\limsup$s of sequences of sets that are not in general
independent. But our only tools for asserting full measure of
$\limsup$ sets are partial converses to the Borel--Cantelli Lemma,
which all work by verifying some suitably weakened version of
independence of sets. Indeed, all the progress that has been made on
(and related to) the Duffin--Schaeffer Conjecture has revolved around
the issue of
independence~\cite{AistleitneretalExtraDivergence,AistleitnerDS,BHHVextraii,duffinschaeffer,ErdosDS,HaynespadicDSC,HPVextra,PollingtonVaughan,Vaaler}.

Motivated by this state of affairs, we seek to study approximations by
fractions whose ``permitted'' numerators lie in randomly chosen subsets
of $[n]$, with the hope that this extra randomness will afford us
enough probabilistic independence to show that our $\limsup$ sets have
full measure. The main results of this paper are versions of
Khintchine's Theorem which hold for approximations by random
fractions. For example:

\begin{theorem}\label{thm:totallyrandom}
  For every $n\in\NN$ pick a subset $P_n\subset[n]$ uniformly at
  random, and denote the resulting sequence of subsets by
  $P = (P_n)_{n=1}^\infty$.  Let
  \begin{equation*}
    \calW^\calP(\Psi) := \set*{\real \in\1int : \Abs{\real - \frac{a}{n}}<\Psi(n) \textrm{ for infinitely many } n\in \NN, a\in\calP_n }.
  \end{equation*}
  Then the following almost surely holds: For any nonincreasing
  $\Psi:\NN\to\RR_{\geq 0}$, we have
  \begin{equation*}
    \lambda \parens*{W^\calP(\Psi)} =
    \begin{cases}
      1 &\textrm{if $\sum_{n=1}^\infty n\Psi(n)=\infty$,} \\
      0 &\textrm{if $\sum_{n=1}^\infty n\Psi(n)< \infty$.}
    \end{cases}
  \end{equation*}
\end{theorem}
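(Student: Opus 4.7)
The plan is to split the dichotomy into its two halves and, in the nontrivial direction, to exploit the mutual independence of the random sets $\calP_n$ across $n$.

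\emph{Convergence.} This direction is deterministic. Writing $A_n^\calP := \bigcup_{a\in\calP_n}(a/n-\Psi(n),a/n+\Psi(n))$, one has $\lambda(A_n^\calP)\leq 2\abs{\calP_n}\Psi(n)\leq 2n\Psi(n)$, so the hypothesis $\sum n\Psi(n)<\infty$ and the first Borel--Cantelli lemma force $\lambda(W^\calP(\Psi))=0$ for every realisation of $\calP$.

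\emph{Divergence, for a fixed admissible $\Psi$.} Because $\calP_n$ is uniformly distributed on subsets of $[n]$, each $a\in[n]$ lies in $\calP_n$ independently with probability $1/2$, and the $\calP_n$ are jointly independent across $n$. Hence, for each fixed $x\in[0,1]$, the events $E_n^x := \{x\in A_n^\calP\}$ are independent in $n$, and
\[
\PP(E_n^x) \;=\; 1 - 2^{-N_n(x)}, \qquad N_n(x) := \#\{a\in[n]:\abs{x-a/n}<\Psi(n)\},
\]
since $E_n^x$ fails precisely when none of the $N_n(x)$ admissible numerators is drawn. In particular $\PP(E_n^x)\geq\tfrac12$ whenever $N_n(x)\geq 1$, i.e.\ whenever $x$ is $\Psi$-approximable at scale $n$ in the classical unrestricted sense. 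Now apply Khintchine's theorem to $\Psi$: since $\Psi$ is non-increasing with $\sum n\Psi(n)=\infty$, the set $\{n:N_n(x)\geq 1\}$ is infinite for Lebesgue-almost every $x$, and therefore $\sum_n\PP(E_n^x)=\infty$ for a.e.\ $x$. The independent form of the second Borel--Cantelli lemma then yields $\PP(x\in W^\calP(\Psi))=1$ for a.e.\ $x$, and Fubini converts this into $\EE[\lambda(W^\calP(\Psi))]=1$. Since $\lambda(W^\calP(\Psi))\leq 1$ always, this forces $\lambda(W^\calP(\Psi))=1$ almost surely.

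\emph{The main obstacle: uniformity in $\Psi$.} The argument above produces, for each fixed admissible $\Psi$, a full-probability event on which the dichotomy holds; the theorem, however, asks for a single full-probability event valid simultaneously for every non-increasing $\Psi$, and the family of such $\Psi$ is uncountable. I expect the fix to be a monotonicity-based reduction: because $\Psi\leq\Psi'$ forces $W^\calP(\Psi)\subseteq W^\calP(\Psi')$, it suffices to isolate a countable sub-family of non-increasing $\Psi$'s that is cofinal from below within the divergent-sum class, and then intersect the countably many resulting a.s.\ events. Carrying out this reduction carefully---using dyadic-rational approximation from below and the insensitivity of $W^\calP(\Psi)$ to altering $\Psi$ at finitely many arguments---is, I expect, where the real bookkeeping lies; the probabilistic heart of the argument is the Fubini-plus-independence step above.
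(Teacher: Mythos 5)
Your convergence half is fine, and your divergence argument \emph{for a single fixed non-increasing $\Psi$} is correct and genuinely different from the paper's: you exploit the fact that under the uniform model each $a\in[n]$ lies in $P_n$ independently with probability $1/2$, so $\PP(E_n^x)=1-2^{-N_n(x)}$, the classical Khintchine theorem gives $N_n(x)\geq 1$ for infinitely many $n$ for a.e.\ $x$, and the second Borel--Cantelli lemma plus Fubini yield $\lambda(W^P(\Psi))=1$ almost surely. The paper instead reduces Theorem~\ref{thm:totallyrandom} to Theorem~\ref{thm:boundedandlinear} (via the disintegration over the cardinality sequence and Lemma~\ref{lem:P(L)=1}), and proves the latter through a random local ubiquity statement fed into Theorem~\ref{thm:ubiquity}; for a fixed $\Psi$ your route is more elementary.

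The problem is the step you defer as ``bookkeeping'': it is the actual difficulty, and the fix you propose cannot work. There is \emph{no} countable family $\{\Psi_k\}$ of non-increasing functions with $\sum_n n\Psi_k(n)=\infty$ that is cofinal from below (even under eventual pointwise domination) in the class of all non-increasing $\Psi$ with $\sum_n n\Psi(n)=\infty$. Indeed, each admissible $\Psi_k$ is strictly positive everywhere (a non-increasing function vanishing at one point vanishes from then on and has convergent sum), so one can diagonalize: build $\Psi$ constant, equal to $c_j$, on blocks $[m_j,m_{j+1})$, where $k_j$ visits every index infinitely often, $c_j\leq\min\{c_{j-1},\tfrac12\Psi_{k_j}(m_j)\}$, and $m_{j+1}$ is chosen so large that $c_j\sum_{n=m_j}^{m_{j+1}-1}n\geq 1$. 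This $\Psi$ is non-increasing with divergent $\sum n\Psi(n)$, yet for every $k$ one has $\Psi(m_j)<\Psi_k(m_j)$ for infinitely many $j$, so no $\Psi_k$ is eventually $\leq\Psi$ and the inclusion $W^P(\Psi_k)\subseteq W^P(\Psi)$ you need is unavailable. (Monotone divergent functions are badly behaved under pointwise comparison; even the minimum of two of them can have convergent weighted sum.) Consequently, intersecting countably many per-$\Psi$ full-measure events cannot cover the uncountable class of admissible $\Psi$, and your argument as it stands only proves the statement with the quantifiers in the weaker order ``for every $\Psi$, almost surely.'' The way to close the gap is to make the almost-sure event $\Psi$-free: this is precisely what the paper does, proving almost surely a quantitative counting/ubiquity property of the random fractions alone (Propositions~\ref{prop:randomlocalubiquity} and~\ref{prop:expectedubiquitybddsubseq}, with the null set indexed only by intervals with rational endpoints) and then invoking the deterministic Theorem~\ref{thm:ubiquity} to handle all decreasing $\Psi$ simultaneously. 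If you want to keep your elementary Borel--Cantelli framework, you would have to replace the events $E_n^x$ by block-counting events indexed by rational intervals and prove a lower bound of that type almost surely, which in effect rebuilds the ubiquity argument.
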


In words: if for each denominator we randomly and uniformly delete a
subset of the possible numerators, then with full probability,
Khintchine's Theorem will continue to hold with the rationals that are
left over. This is actually a consequence of our next
result, which holds in a more challenging setting.

Fix a sequence $f:\NN\to\ZZ$ such that $0\leq f(n) \leq n$
for all $n$. Suppose we declare that for each denominator $n\in\NN$,
we will use exactly $f(n)$ of the $n$ possible numerators from
$[n]$. That is, we choose some
\begin{equation*}
  P_n\in \Omega_{f,n}:=\set*{Q\subset [n] : \#Q = f(n)}
\end{equation*}
according to the uniform probability measure $\PP_{f,n}$ on the collection
$\Omega_{f,n}$ of $f(n)$-element subsets of $[n]$. This results in an
element $P = (P_n)_{n=1}^\infty$ of the product space
\begin{equation*}
  \Omega_f := \prod_{n=1}^\infty \Omega_{f,n}
\end{equation*}
which has been chosen according to the product measure
$\PP_f= \bigotimes_{n=1}^\infty \PP_{f,n}$. 

The following theorem is the main result of this paper.

\begin{theorem}\label{thm:boundedandlinear}
  If the average order of $f$ is positive and bounded, or if it is at
  least linear in $n$, then for $\PP_f$-almost every
  $\calP\in\Omega_f$, the following holds:
  \begin{equation}\label{eq:rkt}
    \begin{split}
      \textrm{For any decreasing $\Psi:\NN\to\RR_{\geq 0}$, we have}\\
      \lambda \parens*{W^\calP(\Psi)} =
      \begin{cases}
        1 &\textrm{if $\sum_{n=1}^\infty f(n)\Psi(n)=\infty$,} \\
        0 &\textrm{if $\sum_{n=1}^\infty f(n)\Psi(n)< \infty$.}
      \end{cases}
    \end{split}
  \end{equation}
\end{theorem}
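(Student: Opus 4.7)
The argument splits naturally into convergence and divergence halves.

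\emph{Convergence.} For every $\calP \in \Omega_f$, set $E_n^\calP(\Psi) := \bigcup_{a \in \calP_n}(a/n - \Psi(n),\, a/n + \Psi(n))$, so $\lambda(E_n^\calP(\Psi)) \leq 2f(n)\Psi(n)$. When $\sum_n f(n)\Psi(n) < \infty$, the Borel--Cantelli lemma gives $\lambda(\calW^\calP(\Psi)) = 0$ with no exceptional $\calP$ needed, and this half holds uniformly in $\Psi$.

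\emph{Divergence.} The strategy is a two-phase argument: first a single-$\Psi$ claim via Fubini and the divergence Borel--Cantelli lemma, then an extension to all $\Psi$ simultaneously via a countable reduction that uses monotonicity. Fix nonincreasing $\Psi$ with $\sum_n f(n)\Psi(n) = \infty$, and for each $x \in [0,1]$ let $A_n(x) := \{a \in [n] : |a/n - x| < \Psi(n)\}$, of cardinality $2n\Psi(n) + O(1)$. Since $\calP_n$ is uniform on the $f(n)$-subsets of $[n]$,
\[
p_n(x) := \PP_f\parens*{x \in E_n^\calP(\Psi)} = 1 - \frac{\binom{n - \#A_n(x)}{f(n)}}{\binom{n}{f(n)}} \geq 1 - (1 - f(n)/n)^{\#A_n(x)}.
\]
A case analysis according to whether $f$ has bounded positive average order or is at least linear yields $p_n(x) \gtrsim \min(1, f(n)\Psi(n))$, hence $\sum_n p_n(x) = \infty$. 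Because the $\calP_n$ are $\PP_f$-independent across $n$, so are the events $\set*{x \in E_n^\calP(\Psi)}$, and the divergence Borel--Cantelli lemma gives $\PP_f(x \in \calW^\calP(\Psi)) = 1$ for each $x$. Fubini then yields $\EE_\calP[\lambda(\calW^\calP(\Psi))] = 1$, and since $\lambda \leq 1$ pointwise, $\lambda(\calW^\calP(\Psi)) = 1$ for $\PP_f$-a.e. $\calP$.

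For the uniform-in-$\Psi$ extension I would exploit the monotonicity $\Psi \leq \Psi' \Rightarrow \calW^\calP(\Psi) \subseteq \calW^\calP(\Psi')$ to reduce to a countable family $\set*{\Psi_k}$ of nonincreasing functions with $\sum_n f(n)\Psi_k(n) = \infty$, chosen so that every nonincreasing $\Psi$ with divergent sum dominates some $\Psi_k$ (pointwise, or along a subsequence whose partial sums $\sum f(n)\Psi_k(n)$ still diverge). Taking the countable intersection of the $\PP_f$-full-measure events from the single-$\Psi$ argument applied to each $\Psi_k$ then gives the desired uniform conclusion.

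\emph{Main obstacle.} The hardest step is the construction of this countable cofinal-from-below family. The class of nonincreasing $\Psi$ with divergent sum is not captured by any simple parameterized family: for instance $\set*{c/n}_{c \in \QQ_{>0}}$ fails to minorize functions like $1/(n\log n)$, which can still be divergent. A subsidiary delicate point is the lower bound on $p_n(x)$ in the bounded-average-order regime, where $f$ is allowed to spike and one must use the averaging hypothesis (rather than a pointwise bound on $f$) to force $\sum_n p_n(x) = \infty$.
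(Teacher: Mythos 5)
Your convergence half is fine and matches the paper, but the divergence half has two genuine gaps. First, the claimed lower bound $p_n(x)\gg\min(1,f(n)\Psi(n))$ is false pointwise in $x$: whenever $2n\Psi(n)<1$, the set of $x$ lying within $\Psi(n)$ of some fraction $a/n$ has measure at most about $2n\Psi(n)<1$, so for a positive-measure (indeed, for most) set of $x$ one has $A_n(x)=\emptyset$ and hence $p_n(x)=0$, regardless of $f$. This regime is unavoidable under your hypotheses: take $f\equiv 1$ and $\Psi(n)=1/(n\log n)$ (nonincreasing, $\sum f(n)\Psi(n)=\infty$, $n\Psi(n)\to 0$), or $f(n)=n$ with the same $\Psi$. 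For such $\Psi$, whether $\sum_n p_n(x)$ diverges for a fixed $x$ depends on how often $x$ lies within $\Psi(n)$ of some $a/n$, i.e.\ on the distribution of $\|nx\|$ --- which is itself a Khintchine-type problem, so the fixed-$x$ second Borel--Cantelli argument is circular exactly where it is needed. The paper avoids fixing $x$ altogether: it proves that, almost surely in $\calP$, the chosen fractions with denominators in $(N_t,N_{t+1}]$, inflated to the comfortable radius $1/F_t$, occupy a fixed proportion of every interval (Lemma~\ref{lem:fareycoupons}, Propositions~\ref{prop:fareycoupons}, \ref{prop:randomlocalubiquity}, \ref{prop:expectedubiquitybddsubseq}, via Chebyshev plus Borel--Cantelli over intervals with rational endpoints), and only then shrinks from radius $1/F_t$ down to $\Psi$ by the deterministic ubiquity theorem (Theorem~\ref{thm:ubiquity}).

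Second, your uniformization over $\Psi$ is not just the ``hardest step'' but a structural obstruction: no countable family $\set{\Psi_k}$ of nonincreasing functions with $\sum_n f(n)\Psi_k(n)=\infty$ can be cofinal from below under eventual pointwise domination (there is no slowest divergent series; worse, even the pointwise minimum of two nonincreasing divergent series can have convergent sum, so one cannot minorize finite subfamilies either), and the ``along a divergent subsequence'' fallback is both unsubstantiated and would require the single-$\Psi$ statement for the truncated, no-longer-monotone functions $\Psi_k$ restricted to the subsequence --- running you back into the first gap. The paper handles the ``for all $\Psi$'' quantifier differently: the almost-sure event is local ubiquity relative to $\rho(n)=1/F_t$, a statement that does not mention $\Psi$ at all (countability enters only through rational-endpoint intervals), and Theorem~\ref{thm:ubiquity} then treats every decreasing $\Psi$ simultaneously, the divergence of $\sum_t\Psi(N_t)/\rho(N_t)$ being deduced from $\sum_n f(n)\Psi(n)=\infty$ precisely by using monotonicity of $\Psi$ and the block structure of Lemmas~\ref{lem:averageorder} and~\ref{lem:boundedonaverage}. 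Note also that monotonicity must enter somewhere in any correct argument: Theorem~\ref{thm:counterex} (with $f(n)=n$, where the randomness is vacuous) shows that a bound of the shape you propose, which never uses monotonicity for fixed $\Psi$, cannot be correct as stated.
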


\begin{remark}
  The theorem applies, in particular, when $f\equiv \varphi$.  The
  average order of $\varphi(n)$ is $6n/\pi^2$ \cite[Theorem
  330]{HardyWright}. It also applies in the case that $f(n)=n$,
  where it gives Khintchine's Theorem.
\end{remark}

\begin{remark}
  The role that the summand $f(n)\Psi(n)$ plays in
  Theorem~\ref{thm:boundedandlinear} is the same as the role of
  $n\Psi(n)$ in Khintchine's Theorem, and $\varphi(n)\Psi(n)$ in the
  Duffin--Schaeffer Conjecture. One should think of it as (half) the
  measure of the set of points in $[0,1]$ that are within $\Psi(n)$ of
  one of the permitted rationals of denominator $n$. 
\end{remark}

When the average order of $f$ is linear in $n$, we get the
following corollary to Theorem~\ref{thm:boundedandlinear}. 

\begin{corollary}\label{cor:philike}
  If the average order of $f$ is at least linear in $n$, then for
  $\PP_f$-almost every $\calP\in\Omega_f$, the following holds: given
  any decreasing $\Psi:\NN\to\RR_{\geq 0}$, we have
  \begin{equation*}
    \lambda \parens*{W^\calP(\Psi)} =
    \begin{cases}
      1 &\textrm{if $\sum_{n=1}^\infty n\Psi(n)=\infty$,} \\
      0 &\textrm{if $\sum_{n=1}^\infty n\Psi(n)< \infty$.}
    \end{cases}
  \end{equation*}
\end{corollary}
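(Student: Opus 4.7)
My plan is to derive Corollary~\ref{cor:philike} directly from Theorem~\ref{thm:boundedandlinear} by showing that, under the hypothesis that the average order of $f$ is at least linear in $n$, the two series $\sum_{n=1}^\infty n\Psi(n)$ and $\sum_{n=1}^\infty f(n)\Psi(n)$ converge (respectively diverge) simultaneously for every decreasing $\Psi$. Once that equivalence is in hand, the corollary's dichotomy is a verbatim restatement of \eqref{eq:rkt}, valid on the same full-measure set of $\calP$ provided by Theorem~\ref{thm:boundedandlinear}.

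The convergence direction is immediate: because $\calP_n\subset[n]$ we have $f(n)\leq n$, so $f(n)\Psi(n)\leq n\Psi(n)$ term by term, and finiteness of $\sum n\Psi(n)$ forces finiteness of $\sum f(n)\Psi(n)$. For the divergence direction I would use Abel summation. Writing $F(N):=\sum_{n=1}^N f(n)$ and $G(N):=\sum_{n=1}^N n$, the hypothesis on the average order yields a constant $c>0$ and an $N_0$ with $F(N)\geq c\,G(N)$ for every $N\geq N_0$. Partial summation gives
\begin{equation*}
  \sum_{n=1}^N f(n)\Psi(n)=F(N)\Psi(N)+\sum_{n=1}^{N-1}F(n)\bigl(\Psi(n)-\Psi(n+1)\bigr),
\end{equation*}
and the analogous identity holds with $F,f$ replaced by $G,n$. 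Monotonicity of $\Psi$ makes each increment $\Psi(n)-\Psi(n+1)$ nonnegative, so on the tail $n\geq N_0$ the termwise bound $F(n)\geq c\,G(n)$ lifts to the partial sum, up to a bounded correction absorbing the finitely many early terms. Hence $\sum n\Psi(n)=\infty$ forces $\sum f(n)\Psi(n)=\infty$, and invoking Theorem~\ref{thm:boundedandlinear} finishes the argument.

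I do not anticipate a substantial obstacle. The only step that deserves attention is translating the qualitative phrase ``average order of $f$ at least linear in $n$'' into the operational lower bound $F(N)\gg N^2$; once that is fixed, the comparison reduces to a one-line Abel summation that uses nothing about $\Psi$ beyond its monotonicity and nothing about $f$ beyond its partial sums. Observe also that the linearity assumption is doing genuine work here: without an $\Omega(n)$ bound on the average of $f$, a sparsely supported $f$ could make $\sum f(n)\Psi(n)$ converge while $\sum n\Psi(n)$ diverges, and no such clean comparison would be possible.
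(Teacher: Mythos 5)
Your proposal is correct and follows the same route as the paper: the paper derives Corollary~\ref{cor:philike} from Theorem~\ref{thm:boundedandlinear} by simply asserting that $\sum f(n)\Psi(n)$ and $\sum n\Psi(n)$ converge or diverge together when $f$ is linear on average and $\Psi$ is monotone. Your Abel-summation argument merely supplies the details of that asserted equivalence (which the paper leaves as ``immediate''), and it is sound.
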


When the average order of $f$ is positive and bounded, we get the following
corollary to Theorem~\ref{thm:boundedandlinear}.

\begin{corollary}\label{cor:boundedlike}
  If the average order of $f$ is positive and bounded, then for
  $\PP_f$-almost every $\calP\in\Omega_f$, the following holds: given
  any decreasing $\Psi:\NN\to\RR_{\geq 0}$, we have
  \begin{equation*}
    \lambda \parens*{W^\calP(\Psi)} =
    \begin{cases}
      1 &\textrm{if $\sum_{n=1}^\infty \Psi(n)=\infty$,} \\
      0 &\textrm{if $\sum_{n=1}^\infty \Psi(n)< \infty$.}
    \end{cases}
  \end{equation*}
\end{corollary}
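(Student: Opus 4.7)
The plan is to deduce Corollary~\ref{cor:boundedlike} directly from Theorem~\ref{thm:boundedandlinear} by showing that, under the hypothesis of positive and bounded average order for $f$, the convergence tests $\sum_n f(n)\Psi(n)<\infty$ and $\sum_n \Psi(n)<\infty$ are equivalent for any decreasing $\Psi$. Once this equivalence is in hand, the full-$\PP_f$-measure set of $\calP$ furnished by Theorem~\ref{thm:boundedandlinear} is exactly the set on which the dichotomy of the corollary holds: one simply rewrites the summability condition appearing in \eqref{eq:rkt}.

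To establish the equivalence, I would first unpack the hypothesis into concrete constants: it provides $0<c_1\leq c_2<\infty$ and some $N_0\in\NN$ such that
\begin{equation*}
c_1 N \;\leq\; F(N) := \sum_{n=1}^N f(n) \;\leq\; c_2 N \qquad (N\geq N_0).
\end{equation*}
Then I would apply Abel summation to $\sum_{n=1}^N f(n)\Psi(n)$, which yields
\begin{equation*}
\sum_{n=1}^N f(n)\Psi(n) \;=\; F(N)\Psi(N) + \sum_{n=1}^{N-1} F(n)\bigl(\Psi(n)-\Psi(n+1)\bigr).
\end{equation*}
Because $\Psi$ is decreasing, every weight $\Psi(n)-\Psi(n+1)$ is nonnegative. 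Substituting $F(n)\asymp n$ (absorbing the finitely many indices $n<N_0$ into a bounded error), and noting that the analogous Abel identity with weight $n$ in place of $F(n)$ telescopes to $\sum_{n=1}^N \Psi(n) - N\Psi(N+1)$, together with the estimate $N\Psi(N)\leq \sum_{n=1}^N \Psi(n)$ (again by monotonicity), one arrives at a two-sided bound of the shape
\begin{equation*}
c_1 \sum_{n=1}^N \Psi(n) - O(1) \;\leq\; \sum_{n=1}^N f(n)\Psi(n) \;\leq\; 2 c_2 \sum_{n=1}^N \Psi(n) + O(1),
\end{equation*}
from which the two series manifestly converge or diverge together.

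No substantial obstacle arises: the argument is just Abel summation combined with the two-sided estimate on $F$. The only point worth flagging is that the monotonicity of $\Psi$ is essential to the comparison, because without it the weights $\Psi(n)-\Psi(n+1)$ could take either sign and the sandwich above would break; but $\Psi$ decreasing is precisely the standing hypothesis of both Theorem~\ref{thm:boundedandlinear} and Corollary~\ref{cor:boundedlike}, so this is a feature rather than a difficulty.
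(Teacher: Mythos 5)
Your proposal is correct and follows the paper's own route: the paper deduces Corollary~\ref{cor:boundedlike} from Theorem~\ref{thm:boundedandlinear} by observing that $\sum_n f(n)\Psi(n)$ and $\sum_n \Psi(n)$ converge or diverge together when $f$ has positive, bounded average order. You merely supply the routine Abel-summation verification of that equivalence, which the paper leaves as immediate, so no further comment is needed.
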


These corollaries are immediate after noticing that $\sum f(n)\Psi(n)$
is equivalent to $\sum \Psi(n)$ when $f$ is positive and bounded on
average, and to $\sum n\Psi(n)$ when it is linear on average.

\

It is unclear whether Theorem~\ref{thm:boundedandlinear} should truly
require the restrictions on the average order of $f$, so we ask the following.

\begin{question}\label{conj:randomkt}
  Given an arbitrary sequence $f(n)$ with $0\leq f(n)\leq n$, does
  property~(\ref{eq:rkt}) hold $\PP_f$-almost surely?
\end{question}

Theorem~\ref{thm:boundedandlinear} can be stated as ``the answer
to Question~\ref{conj:randomkt} is `yes' for sequences that are either
positive and bounded on average, or linear on average.'' We do not
pursue Question~\ref{conj:randomkt} any further here. Let us only
offer a family of functions $f$ for which it seems our methods might
be adaptable, namely, the family of $f$ for which one can find a
lacunary sequence $\set{N_t}\subset \NN$ with the following two
properties holding for all large $t$:
\begin{gather}
  A \leq \frac{F_{t+1}}{F_t} \leq B \label{eq:blocksumgrowth}\\
  F_t \asymp N_t f(N_t) \quad(t\to\infty), \label{eq:averages}
\end{gather}
where $A$ and $B$ are constants such that $1 < A \leq B < \infty$, and
$F_t = \sum_{n\in(N_t, N_{t+1}]}f(n)$ are block-sums.

\

Returning to the original motivation for this work, consider the
direct random analogue of the Duffin--Schaeffer Conjecture:

\begin{conjecture}[Random Duffin--Schaeffer Conjecture]\label{conj:randomdsc}
  $\PP_\varphi$-almost surely,~(\ref{eq:rkt}) holds without the
  monotonicity assumption.
\end{conjecture}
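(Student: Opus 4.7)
The plan is to exploit the independence of the coordinate random variables $\calP_n$: once $x\in[0,1]$ is fixed, the events $\{x\in A_n(\calP)\}$, with $A_n(\calP):=\bigcup_{a\in\calP_n}B(a/n,\Psi(n))$, are genuinely independent in $\calP$, trivializing Borel--Cantelli in one of the two random variables. The convergence direction needs no randomness: for every $\calP$ and every $\Psi$, $\lambda(A_n(\calP))\leq 2\varphi(n)\Psi(n)$, so $\sum\varphi(n)\Psi(n)<\infty$ forces $\lambda(\calW^\calP(\Psi))=0$ via the first Borel--Cantelli lemma.

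For the divergence direction at a fixed $\Psi$, a short hypergeometric calculation gives
\[
\PP_\varphi(x\in A_n(\calP))\ =\ 1-\frac{\binom{n-|T_n(x)|}{\varphi(n)}}{\binom{n}{\varphi(n)}}\ \asymp\ \min\!\left(1,\tfrac{\varphi(n)}{n}|T_n(x)|\right),
\]
where $T_n(x):=\{a\in[n]:|x-a/n|<\Psi(n)\}$. The second Borel--Cantelli lemma applied to the independent events $\{x\in A_n(\calP)\}$ identifies $\PP_\varphi(x\in\calW^\calP(\Psi))$ with the indicator of a \emph{deterministic} set $S(\Psi)\subseteq[0,1]$ on which this sum of probabilities diverges, and Fubini then yields $\EE_\varphi\lambda(\calW^\calP(\Psi))=\lambda(S(\Psi))$. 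Because $\calW^\calP(\Psi)$ is a tail event in $(\calP_n)_n$, Kolmogorov's zero-one law upgrades this to $\lambda(\calW^\calP(\Psi))=\lambda(S(\Psi))$ $\PP_\varphi$-almost surely. For each fixed $\Psi$, the divergence direction is thus equivalent to the purely deterministic claim $\lambda(S(\Psi))=1$; in the regime $n\Psi(n)\gtrsim 1$ infinitely often this is immediate, and otherwise it reads $\lambda(\{x:\sum_n(\varphi(n)/n)\bone_{x\in W_n(\Psi)}=\infty\})=1$ with $W_n(\Psi):=\bigcup_{a\in[n]}B(a/n,\Psi(n))$ using \emph{all} numerators. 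One would then attack this through a second-moment analysis of the overlaps $\lambda(W_m(\Psi)\cap W_n(\Psi))$, which, because the all-fractions sets are larger than the reduced ones, appears milder than the corresponding overlap estimate for Duffin--Schaeffer.

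The principal obstacle is the uncountable quantifier over $\Psi$: the conjecture demands $\lambda(\calW^\calP(\Psi))=1$ $\PP_\varphi$-almost surely \emph{simultaneously} for every divergent $\Psi$, not merely separately for each; and even discretizing $\Psi(n)$ to dyadic values leaves an uncountable sequence space, so the standard countable-intersection trick does not apply directly. One route is to approximate any divergent $\Psi$ from below by elements of a countable family---e.g.\ $\Psi_{S,k}(n):=2^{-k}n^{-1}\bone_{n\in S}$ indexed by $S\subseteq\NN$ and $k\in\NN$---and exploit the monotonicity $\calW^\calP(\tilde\Psi)\subseteq\calW^\calP(\Psi)$; the catch is that there exist divergent $\Psi$ admitting no such $\tilde\Psi$ in the family with $\sum\varphi(n)\tilde\Psi(n)=\infty$. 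A more robust approach is to establish the deterministic claim $\lambda(S(\Psi))=1$ for every divergent $\Psi$ (a purely number-theoretic statement, independent of $\calP$) and then prove $\lambda(\calW^\calP(\Psi))=\lambda(S(\Psi))$ $\PP_\varphi$-almost surely \emph{uniformly in $\Psi$}, perhaps via a bounded-differences concentration estimate adapted to the product structure of $\Omega_\varphi$ combined with a suitable covering of the space of $\Psi$'s. I expect this uniformization step to encode the content that genuinely distinguishes Random Duffin--Schaeffer from its classical counterpart, and so to be the principal difficulty.
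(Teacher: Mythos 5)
This statement is a conjecture in the paper --- the Random Duffin--Schaeffer Conjecture is posed as an open problem, and the paper proves only that it \emph{implies} Catlin's Conjecture (via the equivalence~(\ref{eq:weakiffcatlin}) in \S\ref{sec:wdsc=catlin}); no proof of the conjecture itself exists in the paper, and indeed the author notes that even the monotonicity-free case $f\equiv 1$ is out of reach. Your proposal, as you yourself acknowledge, is not a proof but a reduction. The reduction itself is correct and worth recording: for fixed $x$ the events $\set{x\in A_n(\calP)}$ are independent in $\calP$, the hypergeometric estimate $\PP_\varphi(x\in A_n(\calP))\asymp\min\parens{1,\tfrac{\varphi(n)}{n}\#T_n(x)}$ is right, and the combination of Borel--Cantelli, Fubini, and the fact that $\lambda(\calW^\calP(\Psi))$ is a tail-measurable (hence a.s.\ constant) function of $\calP$ does give, for each fixed $\Psi$, that $\lambda(\calW^\calP(\Psi))=\lambda(S(\Psi))$ almost surely. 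The convergence half is likewise fine.

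The gap is that both remaining steps are left open, and the first of them is not the ``milder'' overlap problem you suggest. Note that $S(\Psi)\subseteq W(\Psi)$: divergence of $\sum_n\PP_\varphi(x\in A_n(\calP))$ forces $T_n(x)\neq\emptyset$, i.e.\ $x\in\calA_n(\Psi)$, for infinitely many $n$. Hence your deterministic core claim, $\lambda(S(\Psi))=1$ whenever $\sum\varphi(n)\Psi(n)=\infty$, implies $\lambda(\calW(\Psi))=1$, which is exactly Conjecture~\ref{conj:weakdsc} (the Weak Duffin--Schaeffer Conjecture), shown in \S\ref{sec:wdsc=catlin} to be equivalent to Catlin's Conjecture --- open since 1976, and not known to be easier than Duffin--Schaeffer. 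So the second-moment/overlap analysis you defer is precisely the unresolved heart of the classical problem, not a routine verification; this is consistent with the paper's point that Conjecture~\ref{conj:randomdsc} implies Catlin, so any proof must clear at least that bar. The second gap, uniformity over the uncountable family of $\Psi$ (the quantifier order ``a.s.\ for all $\Psi$'' versus ``for each $\Psi$, a.s.''), you correctly identify but also do not resolve; with monotonicity removed there is no obvious countable reduction, and the paper's own techniques (ubiquity plus a countable family of rational intervals) rely on monotonicity exactly to sidestep this. In short: a clean and correct fixed-$\Psi$ zero-one reduction, but the statement remains a conjecture, and your plan locates --- rather than removes --- its two essential difficulties.
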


While the Duffin--Schaeffer Conjecture is a statement about a specific
point in $\Omega_\varphi$, the Random Duffin--Schaeffer Conjecture
asks about the \emph{generic} point in $\Omega_\varphi$. There seems
to be no obvious implication between the two statements, one way or
the other (although Conjecture~\ref{conj:randomdsc} can be worded
appealingly, if unrigorously, as \emph{``the Duffin--Schaeffer
  Conjecture is almost surely true''}), so we cannot claim that
Conjecture~\ref{conj:randomdsc} would represent direct progress toward a
proof of the Duffin--Schaeffer Conjecture. However,
in~\S\ref{sec:wdsc=catlin} we show that
Conjecture~\ref{conj:randomdsc} does imply the following well-known
open problem.

\theoremstyle{plain}
\newtheorem*{catconj}{Catlin's Conjecture}
\begin{catconj}[\cite{CatlinConj}, 1976]
  For any $\Psi:\NN\to \RR_{\geq 0}$,
  \begin{equation*}
    \lambda\parens*{\calW (\Psi)} = 
    \begin{cases}
      1 &\textrm{if } \sum_{n=1}^\infty \varphi(n)\max\set*{\Psi(kn) : k\in\NN} =\infty \\
      0 &\textrm{if }\sum_{n=1}^\infty
      \varphi(n)\max \set*{\Psi(kn) : k\in\NN} < \infty.
    \end{cases}
  \end{equation*}
\end{catconj}

It is not known whether Catlin's Conjecture implies the
Duffin--Schaeffer Conjecture. (It was claimed so in~\cite{CatlinConj},
but an error was found by Vaaler~\cite{Vaaler}, and the issue remains
unresolved.) Regardless, one might expect that even if Catlin's
Conjecture is weaker than the Duffin--Schaeffer Conjecture, it may
still be very difficult to prove, and therefore
Conjecture~\ref{conj:randomdsc} might also be comparably
difficult. Indeed, we have not managed to remove the monotonicity
assumption from Theorem~\ref{thm:boundedandlinear}, even for the
simplest case $f\equiv 1$. We therefore ask the following question.

\begin{question}\label{question:which}
  Are there sequences $f(n)$ with $0\leq f(n) \leq n$ for which,
  $\PP_f$-almost surely,~(\ref{eq:rkt}) holds without the monotonicity
  assumption? If so, which? 
\end{question}

Clearly, Conjecture~\ref{conj:randomdsc} answers the first part of
this question affirmatively. But a general answer may be of
independent interest, and a partial answer (such as treating bounded
sequences) is likely more accessible than
Conjecture~\ref{conj:randomdsc}.

\

We explore cases where the answer to Question~\ref{question:which} is
``no.'' It is not hard to see that the same counterexample provided by
Duffin and Schaeffer prevents us from removing monotonicity from the
statement of Theorem~\ref{thm:boundedandlinear} whenever $f(n)\gg n$.
We give a counterexample whose rate of divergence we can track, and
which immediately shows that monotonicity cannot be removed if $f(n)$
grows faster than $n/\log\log n$.

\begin{theorem}[Duffin--Schaeffer-style
  Counterexample]\label{thm:counterex}
  Suppose that
  \begin{equation*}
    \lim_{n\to\infty}\frac{f(n)\log\log n}{n} = \infty.
  \end{equation*}
  Then there exists a function $\Psi:\NN\to\RR_{\geq 0}$ such
  that $\sum_{n=1}^\infty f(n)\Psi(n)$ diverges, yet
  $\lambda\parens*{\calW(\Psi)}=0$.
\end{theorem}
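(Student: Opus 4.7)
The plan is to construct a counterexample in the spirit of Duffin and Schaeffer's original 1941 argument, with the support of $\Psi$ tailored to $f$ so as to exploit the extremality of primorials in Mertens' theorem. Write $\eps_n := f(n)\log\log n/n$, so that $\eps_n \to \infty$ by hypothesis, and let $N_t := p_1 p_2 \cdots p_t$ denote the $t$-th primorial. I would extract an increasing subsequence $\{N_{t_k}\}_{k \geq 1}$ along which $\eps_{N_{t_k}}$ diverges rapidly (possible since $\eps \to \infty$), and put
\[
  \Psi(n) := \begin{cases} \delta_k & \text{if } n = N_{t_k}, \\ 0 & \text{otherwise,} \end{cases}
\]
for decreasing positive weights $\delta_k$ to be chosen below.

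For the nullness of $\calW(\Psi)$ I would use the standard reduction to reduced fractions. Writing any $a/n$ in lowest terms as $b/m$ with $m \mid n$, one has $|x - a/n| = |x - b/m| < \Psi(n) \leq \Psi^*(m)$, where $\Psi^*(m) := \sup_{j \geq 1} \Psi(jm)$. Hence $\calW(\Psi) \subseteq \calW'(\Psi^*) \cup \QQ$, and Borel--Cantelli applied to balls around reduced fractions gives $\lambda(\calW(\Psi)) = 0$ as soon as $\sum_m \varphi(m)\Psi^*(m) < \infty$. Since $\{N_{t_k}\}$ forms a divisibility chain and the $\delta_k$ are decreasing, Gauss's identity $\sum_{m \mid N}\varphi(m) = N$ telescopes to
\[
  \sum_m \varphi(m)\Psi^*(m) = \sum_k \delta_k\bigl(N_{t_k} - N_{t_{k-1}}\bigr) \asymp \sum_k \delta_k N_{t_k}
\]
(with $N_{t_0} := 1$).

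For the divergence of $\sum_n f(n)\Psi(n) = \sum_k f(N_{t_k})\delta_k$, combining Mertens' theorem ($\varphi(N)/N \asymp 1/\log\log N$ on primorials) with $f(N)/N \geq \eps_N/\log\log N$ reduces the matter, upon writing $A_k := \delta_k N_{t_k}$, to the combinatorial problem of choosing $A_k > 0$ with $\sum_k A_k < \infty$ and $\sum_k A_k\,\eps_{N_{t_k}}/\log\log N_{t_k} = \infty$. In the favorable regime where $t_k$ can be selected polynomial in $k$ with $\eps_{N_{t_k}} \geq k$, one has $\log\log N_{t_k} \asymp \log k$, and the choice $A_k := 1/k^2$ finishes the construction since $\sum 1/(k \log k)$ diverges.

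The main obstacle is the ``slow'' regime, in which $\eps_n/\log\log n$ stays bounded along every primorial subsequence. Forcing $\eps_{N_{t_k}}$ to grow fast enough to beat $\log\log N_{t_k}$ may then require $t_k$ to grow super-exponentially, which inflates $\log\log N_{t_k}$ and destroys the balance. To cover this case, the support of $\Psi$ must be enlarged from primorial singletons to carefully chosen windows of multiples of each $N_{t_k}$, boosting $\sum f(n)\Psi(n)$ without proportionally inflating $\sum_m \varphi(m)\Psi^*(m)$; the technical core of the argument is then a divisor-counting estimate for integers in a window around a primorial, with the $\log\log n$ factor in the hypothesis reflecting exactly the extremal bound of Mertens' theorem.
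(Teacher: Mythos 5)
Your construction, as written, only covers a strictly stronger hypothesis than the theorem's. With $\Psi$ supported on singletons $N_{t_k}$, nullness forces $\sum_k \delta_k N_{t_k}<\infty$ while divergence needs $\sum_k \delta_k N_{t_k}\,\eps_{N_{t_k}}/\log\log N_{t_k}=\infty$, so you need $\eps_n/\log\log n$ to be unbounded along your chosen denominators. But the theorem only assumes $\eps_n=f(n)\log\log n/n\to\infty$, possibly as slowly as $\log\log\log n$; then forcing $\eps_{N_{t_k}}\geq k$ requires $t_k$ to grow exponentially, $\log\log N_{t_k}\asymp k$, and the two requirements on $A_k=\delta_k N_{t_k}$ become contradictory. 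You acknowledge this ``slow regime'' but leave it as the unfinished technical core, and the fix you sketch --- enlarging the support to windows of \emph{multiples} $jN_{t_k}$ --- does not create the needed imbalance: for multiples, both the measure cost ($\lambda(\calA_{jN}(\delta))\leq 2jN\delta$, summing to $\asymp J^2N\delta$) and the gain in $\sum f(n)\Psi(n)$ (also $\asymp J^2 N\delta/(\tau\log\log N)$) scale the same way, so you are no better off than with singletons.

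The missing idea, which is the heart of the Duffin--Schaeffer mechanism and of the paper's proof, is to support $\Psi$ on many \emph{divisors} of a single highly composite modulus. Take $K_j=M_j!$ and give $\Psi$ the common value $c_j/K_j$ at the denominators $k_i^{(j)}=K_j/i$, $i=1,\dots,M_j$, with $\sum_j c_j<\infty$. By divisibility, every $\calA_{k_i^{(j)}}(\Psi)$ is contained in $\calA_{K_j}(\Psi)$, whose measure is at most $2c_j$, so Borel--Cantelli gives $\lambda(\calW(\Psi))=0$ at total cost $\sum_j 2c_j$ --- no factor of $M_j$ is paid. Meanwhile the weighted sum gains a harmonic factor: writing $f(n)\geq n/(\tau(n)\log\log n)$ with $\tau\downarrow 0$, one gets $\sum_i f(k_i^{(j)})\,c_j/K_j \gg \bigl(c_j/(\tau((M_j-1)!)\log\log M_j!)\bigr)\sum_{i\leq M_j}1/i$, and $\sum_{i\leq M_j}1/i\asymp\log M_j\asymp\log\log M_j!$ exactly cancels the $\log\log$ from the hypothesis; the leftover $c_j/\tau((M_j-1)!)$ can be made $\gg 1$ infinitely often by choosing $M_j$ to grow fast, which forces divergence. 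Your primorial/Mertens framing and the reduction to reduced fractions via $\Psi^*$ are not needed for this (the containment argument uses plain, not reduced, fractions); the $\log\log$ in the statement reflects the harmonic sum over divisors of a factorial, not the extremality of $\varphi$ on primorials. As it stands, the proposal proves the theorem only under an assumption like ``$f(n)(\log\log n)^2/n$ unbounded along primorials,'' and the general case remains open in your argument.
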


Since for any $\calP\in\Omega_f$ and $\Psi$, we have
$\calW^\calP(\Psi)\subset\calW(\Psi)$, it is clear that
$\calW^\calP(\Psi)$ is null whenever $\calW(\Psi)$ is. Therefore,
Theorem~\ref{thm:counterex} gives us the following.

\begin{corollary}\label{cor:monotonicity}
  If $\parens*{f(n)\log\log n}/n \to\infty$, then there exist
  functions $\Psi$ such that $\sum_nf(n)\Psi(n)$ diverges, yet
  $\lambda\parens*{\calW^\calP(\Psi)}=0$ for \emph{every} $\calP\in\Omega_f$.
\end{corollary}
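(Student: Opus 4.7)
The plan is to deduce this corollary directly from Theorem~\ref{thm:counterex} via a trivial inclusion, so there is really no new work to be done beyond unpacking the definitions. First I would invoke Theorem~\ref{thm:counterex} to produce, under the hypothesis $f(n)\log\log n / n \to \infty$, a function $\Psi:\NN\to\RR_{\geq 0}$ whose associated series $\sum_{n=1}^\infty f(n)\Psi(n)$ diverges but for which $\lambda(\calW(\Psi))=0$.

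Next I would observe the set-theoretic inclusion $\calW^\calP(\Psi)\subseteq \calW(\Psi)$ for every $\calP\in\Omega_f$. This is immediate from the definitions: every $\real\in\calW^\calP(\Psi)$ satisfies $|\real - a/n|<\Psi(n)$ for infinitely many $n$ and some $a\in\calP_n\subseteq[n]$, and any such pair $(n,a)$ also witnesses membership in $\calW(\Psi)$. Monotonicity of Lebesgue measure then gives $\lambda(\calW^\calP(\Psi))\leq \lambda(\calW(\Psi))=0$ for every $\calP\in\Omega_f$ simultaneously, not just almost surely. That finishes the proof.

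There is no real obstacle here; the corollary is by design a direct packaging of Theorem~\ref{thm:counterex}. The only point worth flagging in the writeup is that the conclusion holds for \emph{every} $\calP\in\Omega_f$ (as opposed to $\PP_f$-almost every), which is a notably stronger quantifier than appears in the theorems establishing the divergence half of~(\ref{eq:rkt}); this strength comes for free because the obstruction lies at the level of $\calW(\Psi)$ itself, before any random thinning by $\calP$.
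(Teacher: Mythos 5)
Your proposal is correct and is exactly the paper's argument: the paper derives the corollary from Theorem~\ref{thm:counterex} together with the same observation that $\calW^\calP(\Psi)\subset\calW(\Psi)$ for every $\calP\in\Omega_f$, so null-ness of $\calW(\Psi)$ transfers to every $\calW^\calP(\Psi)$. Nothing further is needed.
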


Notice that $(\varphi(n)\log\log n)/n$ can be made arbitrarily large,
for example by taking prime values of $n$. On the other hand, it is
well-known that
\begin{equation}\label{eq:upperboundonphi}
  \varphi(n) < \frac{n}{e^\gamma \log\log n}
\end{equation}
for infinitely may $n$. One might make the (overly optimistic) guess
that if a function $f$ has the following two things in common with
$\varphi$:
\begin{itemize}
\item $(f(n)\log\log n)/n \to \infty$ along a subsequence of $\NN$,
  and
\item $(f(n)\log\log n)/n$ is bounded on some other
  subsequence,
\end{itemize}
then this is enough to guarantee that we will never find a function
$\psi$ like the one in Theorem~\ref{thm:counterex}. But the next
corollary shows that that is not necessarily true. The divergence of
$(f(n)\log\log n)/n$ in Corollary~\ref{cor:monotonicity} can occur
along a subsequence, while keeping the expression bounded off of that
subsequence.

\begin{corollary}\label{cor:sweetspot}
  There exist sequences $1\leq f(n)\leq n$ such that
  \begin{equation*}
    \begin{dcases}
      \lim_{i\to\infty}\frac{f(k_i)\log\log k_i}{k_i} = \infty &\textrm{for some subsequence } \set{k_i}\subsetneq \NN, \textrm{ and} \\
      \frac{f(n)\log\log n}{n} &\textrm{is bounded uniformly over all
      } n\notin\set{k_i},
    \end{dcases}
  \end{equation*}
  and such that there exists a function $\Psi:\NN\to\RR_{\geq 0}$
  such that $\sum_{n}f(n)\Psi(n)$ diverges, yet
  $\lambda\parens*{\calW(\Psi)}=0$, hence
  $\lambda\parens*{\calW^\calP(\Psi)}=0$ for every $\calP\in\Omega_f$.
\end{corollary}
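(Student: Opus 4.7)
The plan is to embed a Duffin--Schaeffer-style counterexample into a prescribed sparse subsequence of $\NN$, letting $f$ be large exactly on that subsequence and setting $f\equiv 1$ elsewhere. The underlying observation is that the $\Psi$ produced in the proof of Theorem~\ref{thm:counterex} is supported on a very thin set of integers, so all the divergence of $\sum f(n)\Psi(n)$ comes from that thin set; this suggests we can work with a $\Psi$ supported on any prescribed sparse subsequence along which the hypothesis of Theorem~\ref{thm:counterex} holds.

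Concretely, I would take $\{k_i\}$ to be the primorials $k_i = p_1 p_2 \cdots p_i$, where $p_j$ denotes the $j$th prime. By Mertens' theorem, $\varphi(k_i)/k_i \sim e^{-\gamma}/\log p_i \to 0$, which is precisely what makes the classical Duffin--Schaeffer overlap mechanism effective on $\{k_i\}$. Then set
\[
f(n) = \begin{cases} n & \text{if } n \in \{k_i\},\\ 1 & \text{otherwise.}\end{cases}
\]
Along the subsequence, $(f(k_i)\log\log k_i)/k_i = \log\log k_i \to \infty$, while off the subsequence, $(f(n)\log\log n)/n = (\log\log n)/n \to 0$ is bounded. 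This secures the two bulleted conditions in the statement.

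Next I would construct $\Psi$ by re-running the argument from the proof of Theorem~\ref{thm:counterex} with all the support of $\Psi$ forced onto $\{k_i\}$. Set $\Psi(n) = 0$ for $n \notin \{k_i\}$, and choose $\Psi(k_i)$ (for instance of the form $c_i/k_i$ with $\sum_i c_i = \infty$) so that $\sum_i k_i\Psi(k_i) = \infty$ while the unions $\bigcup_{a\in[k_i]}(a/k_i - \Psi(k_i), a/k_i + \Psi(k_i))$ concentrate, via the many common factors present among the $a/k_i$ arising from $\varphi(k_i)/k_i\to 0$, onto a fixed null set. Granting this, $\sum_n f(n)\Psi(n) = \sum_i k_i\Psi(k_i) = \infty$ yet $\lambda(\calW(\Psi)) = 0$, and the inclusion $\calW^\calP(\Psi) \subset \calW(\Psi)$ (valid for every $\calP \in \Omega_f$) delivers the final claim.

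The main obstacle will be checking that the counterexample construction in the proof of Theorem~\ref{thm:counterex} can indeed be localized to a prescribed subsequence rather than requiring the hypothesis $(f(n)\log\log n)/n \to \infty$ on all of $\NN$. This should be essentially free once one inspects that proof: the Duffin--Schaeffer counterexample already chooses $\Psi$ supported on a sparse sequence of integers of small $\varphi$-ratio, so replacing the ambient hypothesis with a subsequential one amounts only to specifying the support of $\Psi$ in advance rather than letting the proof produce it.
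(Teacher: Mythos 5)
Your choices of $\{k_i\}$ and $f$ satisfy the two bulleted conditions, but the heart of the corollary --- producing $\Psi$ with $\sum_n f(n)\Psi(n)=\infty$ and $\lambda(\calW(\Psi))=0$ --- breaks down, because the concentration you invoke does not exist. With $\Psi$ supported only on the primorials themselves, the relevant sets are $\calA_{k_i}(\Psi)=\bigcup_{a\in[k_i]}B(a/k_i,\Psi(k_i))$, and since \emph{all} numerators $a\in[k_i]$ are used, these are unions of intervals centred at points equally spaced at distance $1/k_i$; their measure is exactly $\min\{1,\,2k_i\Psi(k_i)\}$, and the smallness of $\varphi(k_i)/k_i$ gives no savings whatsoever (common factors only matter for reduced fractions, or when comparing \emph{different} denominators with a common multiple). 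Since your $f$ equals $k_i$ on the support of $\Psi$, divergence of $\sum_n f(n)\Psi(n)=\sum_i k_i\Psi(k_i)$ forces either $k_i\Psi(k_i)\geq 1/2$ infinitely often, in which case $\calA_{k_i}(\Psi)=[0,1]$ infinitely often and $\calW(\Psi)=[0,1]$, or else $\sum_i\lambda(\calA_{k_i}(\Psi))=\infty$, so the convergence Borel--Cantelli argument for measure zero is unavailable; worse, because $k_i\mid k_j$ for $i<j$, these unions of equally spaced intervals are essentially quasi-independent across scales, so one expects full (not zero) measure. The Duffin--Schaeffer overlap mechanism operates \emph{across} many denominators dividing a common highly composite integer, not inside a single denominator, so it cannot be ``localized'' to a one-denominator-per-scale sequence like the primorials.

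This is precisely why the paper's proof takes the exceptional set to be, for each $j$, the \emph{entire} block of divisors $k_i^{(j)}=M_j!/i$, $1\leq i\leq M_j$, with $\Psi\equiv c_j/M_j!$ on that block: all the corresponding sets sit inside $\calA_{M_j!}(\Psi)$, whose measure is at most $2c_j$ with $\sum_j c_j<\infty$, giving $\lambda(\calW(\Psi))=0$; and the divergence of $\sum_n f(n)\Psi(n)$ is rescued by the harmonic sum $\sum_{i\leq M_j}1/i\asymp\log M_j$, which compensates the factor $\log\log(M_j!)$ that the constraint $f(n)\leq n$ forces upon you. Concretely, the paper takes $\tau(n)\downarrow 0$ with $\tau(n)\geq 1/\log\log n$, requires $f(n)\geq n/(\tau(n)\log\log n)$ on every $k_i^{(j)}$ (so the ratio $f(n)\log\log n/n\geq 1/\tau(n)\to\infty$ along the subsequence, while still $f(n)\leq n$) and $f(n)\ll n/\log\log n$ off it, and then chooses the $M_j$ so that $c_j/\tau((M_j-1)!)\gg 1$ infinitely often, exactly as in the proof of Theorem~\ref{thm:counterex}. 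Your construction, by putting all the weight of $f$ on a single denominator per scale, discards the harmonic-sum leverage that makes the counterexample compatible with $f(n)\leq n$, and the claim that the union for one denominator ``concentrates onto a fixed null set'' is the missing --- and false --- step.
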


\section{Notation}
\label{sec:notation}

For two functions $f$ and $g$, we use $f\ll g$ to mean that there exists
some constant $C>0$ such that $f \leq Cg$. This is synonymous with
$g \gg f$. We use $f\asymp g$ to mean that $f\ll g$ and $f \gg g$ both
hold. Usually, it is specified that we only need these relations to hold
for all large arguments of the functions. 

\section{Proof of Theorem~\ref{thm:totallyrandom}}
\label{sec:totallyrandomproof}

In this section we give a proof of Theorem~\ref{thm:totallyrandom}
which relies on Theorem~\ref{thm:boundedandlinear}. The main idea lies
in the fact that, with probability $1$, one picks subsets of $[n]$
whose cardinalities grow linearly on average.

Let $\Omega$ denote the space of sequences $P = (P_n)_{n=1}^\infty$ of
subsets $P_n\subset[n]$, and denote by $\PP$ the probability measure
described in Theorem~\ref{thm:totallyrandom}. Note that $\Omega$ is
the disjoint union
\begin{equation*}
  \Omega = \bigcup_{f\in\calF} \Omega_f
\end{equation*}
where $\calF$ denotes the collection of all possible sequences
$0\leq f(n)\leq n$. In turn, we may see $\calF$ as the sequence space
\begin{equation*}
\calF = \prod_{n=1}^\infty \calF_n 
\end{equation*}
where $\calF_n = \set*{0,1, \dots, n}$. The space $\calF$ carries the
measure $\nu = \bigotimes_{n=1}^\infty \nu_n$, where $\nu_n$ is the
probability measure on $\calF_n$ defined by
$\nu_n(\set{k}) = \binom{n}{k}\parens*{\frac{1}{2}}^n$. Notice that
$\nu$ is the pushforward $\pi_* (\PP)$ of $\PP$ under the natural
projection $\pi:\Omega \to \calF$. Then $\PP$ disintegrates as
\begin{equation*}
  \PP = \int_\calF \PP_f\,d\nu(f).
\end{equation*}
In other words, the random process described in
Theorem~\ref{thm:totallyrandom} is the same as the following: first,
for each $n$ choose $f(n)$ according to $\nu_n$, and then choose
$P\in\Omega_f$ according to $\PP_f$.

\begin{lemma}\label{lem:P(L)=1}
The average order of $\nu$-almost every $f\in\calF$ is at least linear.
\end{lemma}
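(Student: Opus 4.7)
The plan is to apply Chebyshev's inequality and the first Borel--Cantelli lemma. Under $\nu$, the coordinates $f(n)$ are independent, and each $f(n)$ is distributed as a $\mathrm{Binomial}(n,1/2)$ random variable; in particular $\int f(n)\,d\nu = n/2$ and $\int (f(n)-n/2)^2\,d\nu = n/4$.

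Writing $S_N(f) := \sum_{n=1}^N f(n)$, linearity of expectation and independence give
\begin{equation*}
  \int S_N\,d\nu = \frac{N(N+1)}{4}, \qquad \int\parens*{S_N - \tfrac{N(N+1)}{4}}^2 d\nu = \frac{N(N+1)}{8}.
\end{equation*}
For any fixed $\delta>0$, Chebyshev's inequality then yields
\begin{equation*}
  \nu\parens*{\abs*{S_N - \tfrac{N(N+1)}{4}} > \delta N^2} \ll \frac{1}{\delta^2 N^2},
\end{equation*}
which is summable in $N$.

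By the first Borel--Cantelli lemma, for each fixed $\delta>0$, $\nu$-almost every $f\in\calF$ satisfies $S_N(f) \geq \tfrac{N^2}{4} - \delta N^2$ for all sufficiently large $N$. Intersecting this full-measure set over the countable collection $\delta = 1/k$, $k\in\NN$, establishes that $\nu$-almost surely $S_N(f)/N^2 \to 1/4$. In particular $\frac{1}{N}\sum_{n=1}^N f(n) \gg N$ for almost every $f$, which is exactly the statement that $f$ has at least linear average order.

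I do not anticipate any real obstacle: the argument is a routine application of the second-moment method, using only that the $f(n)$ are independent under $\nu$ with mean of order $n$ and variance of order $n$, so that the standard deviation of $S_N$ is of order $N$ while its mean is of order $N^2$.
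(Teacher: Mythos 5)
Your proof is correct and follows essentially the same route as the paper: compute the mean $\asymp N^2$ and variance $\asymp N^2$ of the partial sums $\sum_{n=1}^N f(n)$, apply Chebyshev's inequality to get a summable deviation probability, and conclude by the Borel--Cantelli Lemma. The only (harmless) difference is cosmetic: you obtain the sharper almost-sure limit $S_N/N^2 \to 1/4$ by intersecting over $\delta = 1/k$, whereas the paper stops at $X_N \geq \tfrac{1}{2}\EE(X_N)$ eventually, which already suffices.
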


\begin{proof}
  Treating $f(n)$ as a random variable on $(\calF_n, \nu_n)$, we have
  the expectation $\EE(f(n))=\frac{n}{2}$ and variance
  $\sigma^2(f(n)) = \frac{n}{4}$. In particular, for every $N\in \NN$,
  we have $\EE\parens*{X_N} \asymp N^2$ and
  $\sigma^2\parens*{X_N} \asymp N^2$, where
  $X_N := \sum_{n=1}^N f(n)$. Chebyshev's inequality says that for any
  $y>0$, we have
  \begin{equation*}
    \nu\parens*{\abs*{ X_N - \EE(X_N)} > y} \leq \frac{\sigma^2(X_N)}{y^2}.
  \end{equation*}
  Setting $y = \EE(X_N)/2$, see that
  \begin{equation*}
    \nu\parens*{X_N < \frac{1}{2}\EE(X_N)} \leq 4\frac{\sigma^2(X_N)}{\EE(X_N)^2} \ll \frac{1}{N^2}.
  \end{equation*}
  Since $\sum_N N^{-2}$ converges, we find by the Borel--Cantelli
  Lemma that the probability is zero that $X_N < \frac{1}{2}\EE(X_N)$
  for infinitely many $N$. In other words, $\nu$-almost every
  $f\in \calF$ grows at least linearly on average.
\end{proof}

\begin{proof}[Proof of Theorem~\ref{thm:totallyrandom}] 
  Let $K$ denote the elements of $\Omega$ for which~(\ref{eq:rkt})
  holds, and let $L \subset \calF$ be the collection of sequences in
  $\calF$ whose average order is at least linear in $n$. Then
  \begin{align*}
    \PP(K) &= \int_\calF \PP_{f}(K)\,d\nu(f) \\
&\geq \int_L \PP_f(K)\,d\nu(f) \\
&\overset{\textrm{Thm.~\ref{thm:boundedandlinear}}}{=} \int_L 1\,d\nu(f)\\
&=\nu(L) \overset{\textrm{Lem.~\ref{lem:P(L)=1}}}{=} 1. 
  \end{align*}
  This proves the theorem, after observing that for any $f\in L$ and
  monotonic $\Psi$, the series $\sum f(n)\Psi(n)$ and
  $\sum n\Psi(n)$ either both converge or both diverge.
\end{proof}

\section{Ubiquitous systems}
\label{sec:ubiquity}

We prove Theorem~\ref{thm:boundedandlinear} using the tools of
ubiquity theory. For a detailed guide through this theory, we refer
to~\cite{BDV}, and for a more survey-like treatment,
see~\cite{beresnevich_ramirez_velani_2016}. Here are the main points.

\begin{definition*}[Local ubiquity]
  Let $\mathcal R \subseteq [0,1]\cap\QQ$ be a set of rational points,
  indexed by some subset $\mathcal J \subseteq \set{(a,n) : n\in\NN, a \in [n]}$ by the
  mapping $(a,n) \mapsto \frac{a}{n}$.  Suppose there exists a
  function $\rho:\RR^+\to\RR^+$ with $\lim_{r\to\infty}\rho(r) = 0$, a
  constant $\kappa>0$, and sequences $M_t < N_t$
  ($M_t\uparrow\infty$), such that for any sufficiently small interval
  $I\subset [0,1]$, there exists $t_0:=t_0(I)$ such that
  \begin{equation*}
    \lambda\parens*{I\cap \bigcup_{\substack{(a,n)\in\mathcal J : \\ M_t < n \leq N_t}} B\parens*{\frac{a}{n}, \rho(N_t)}} \geq \kappa \lambda(I)
  \end{equation*}
  for all $t\geq t_0$. In this case we say that $(\mathcal R,\mathcal J)$ is a
  \emph{locally ubiquitous system relative to}
  $(\rho, \set{M_t}, \set{N_t})$.
\end{definition*}

For $\Psi:\RR^+\to\RR^+$ a decreasing function, 
let
\begin{equation*}
  \Lambda(\mathcal R, \mathcal J, \Psi) = \set*{x\in[0,1] : x\in
    B\parens*{\frac{a}{n}, \Psi(n)} \textrm{ for infinitely many
      $(a,n)\in\mathcal J$}}.
\end{equation*}
For example, if $\mathcal R = [0,1]\cap \QQ$ and
$\mathcal J = \set{(a,n) : n\in\NN, a\in[n]}$, then
$\Lambda(\mathcal R, \mathcal J, \Psi) = W(\Psi)$. If for some
$P\in\Omega_f$, we put

\begin{equation*}
  \QQ(P_n) := \set*{\frac{a}{n} : a\in P_n},
\end{equation*}
and set
\begin{equation*}
  \mathcal R = \QQ(P_n) := \set*{\frac{a}{n} : a\in P_n} \qquad\textrm{and}\qquad   \mathcal J = J(P) := \set*{(a,n) : a\in P_n},
\end{equation*}
then $\Lambda(\mathcal R, \mathcal J, \Psi) = W^P(\Psi)$.

The following is a special case of~\cite[Theorem~1, Corollary~2]{BDV}. 

\begin{theorem}[Ubiquity theorem]\label{thm:ubiquity}
  Suppose $(\mathcal R, \mathcal J)$ is a locally ubiquitous system
  relative to $(\rho, \set{M_t}, \set{N_t})$ and that
  $\Psi:\RR^+\to\RR^+$ is a decreasing function. If $\rho$ is
  $\set{N_t}$-regular, and $\sum\frac{\Psi(N_t)}{\rho(N_t)} = \infty$,
  then $\lambda(\Lambda(\mathcal R,\mathcal J, \Psi))=1$.
\end{theorem}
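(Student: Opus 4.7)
The plan is to deduce $\lambda(\Lambda(\mathcal{R},\mathcal{J},\Psi))=1$ from the intermediate claim that there is a constant $c>0$ such that $\lambda(\Lambda(\mathcal{R},\mathcal{J},\Psi)\cap I)\geq c\,\lambda(I)$ for every sufficiently small interval $I\subset[0,1]$. Once this local density bound is in hand, the Lebesgue density theorem rules out any density point of the complement, forcing $\lambda(\Lambda(\mathcal{R},\mathcal{J},\Psi)^c)=0$. So the whole effort is in producing a uniform lower bound on $\Lambda(\mathcal{R},\mathcal{J},\Psi)\cap I$.

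To produce this bound I would run a Cantor-style construction inside a fixed small $I$. First I would choose indices $t_1<t_2<\cdots$ growing so quickly that on each surviving subinterval the local ubiquity hypothesis applies at index $t_k$ and the scales separate, $\rho(N_{t_{k+1}})\ll \Psi(N_{t_k})$. At stage $k$ one maintains a finite disjoint family $\mathcal{F}_k$ of balls of the form $B(a/n,\Psi(n))$ with $(a,n)\in\mathcal{J}$ and $n\leq N_{t_k}$, all contained in $I$. Inside each $J=B(a/n,\Psi(n))\in\mathcal{F}_k$, local ubiquity with $t=t_{k+1}$ yields $\rho$-balls $B(a'/n',\rho(N_{t_{k+1}}))$ covering at least $\kappa\,\lambda(J)$ of $J$; a Vitali-type extraction produces a pairwise disjoint subfamily still covering $\asymp \kappa\,\lambda(J)$. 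Passing to the concentric $\Psi$-balls, which by monotonicity of $\Psi$ have radius at least $\Psi(N_{t_{k+1}})$, gives a disjoint subfamily of $\Psi$-balls inside $J$ of total measure at least a constant times $(\Psi(N_{t_{k+1}})/\rho(N_{t_{k+1}}))\,\lambda(J)$. Unioning over $J\in\mathcal{F}_k$ produces $\mathcal{F}_{k+1}$, and every point of $\bigcap_k\bigcup\mathcal{F}_k$ lies in $\Lambda(\mathcal{R},\mathcal{J},\Psi)$.

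The $\set{N_t}$-regularity of $\rho$ is precisely what lets me compare the ratios $\Psi(N_{t_k})/\rho(N_{t_k})$ across stages: by an appropriate grouping of indices it converts the divergence $\sum\Psi(N_t)/\rho(N_t)=\infty$ into a lower bound $\liminf_k \lambda(\bigcup\mathcal{F}_k)\geq c\,\lambda(I)$, and hence the required uniform density for $\Lambda(\mathcal{R},\mathcal{J},\Psi)\cap I$. The main obstacle throughout is the bookkeeping of nested containment: each $\Psi$-ball at stage $k+1$ must lie strictly inside a $\Psi$-ball from stage $k$, not merely inside the enveloping $\rho$-ball that local ubiquity produces at stage $k$. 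Arranging this uses both the scale separation $\rho(N_{t_{k+1}})\ll \Psi(N_{t_k})$ and a careful choice of how much shrinkage to impose at each step, and it is here that the regularity hypothesis on $\rho$ does its real work. An alternative approach would be a Chung--Erd\H{o}s second-moment argument applied to the block-unions $A_t=\bigcup_{M_t<n\leq N_t,\,(a,n)\in\mathcal{J}}B(a/n,\Psi(n))$, but that merely relocates the difficulty to the pairwise overlap estimate $\lambda(A_s\cap A_t\cap I)\ll \lambda(A_s\cap I)\,\lambda(A_t\cap I)$, which again demands local ubiquity together with regularity of $\rho$.
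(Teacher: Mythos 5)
You should first note that the paper does not prove this statement at all: it is quoted as a special case of \cite[Theorem~1, Corollary~2]{BDV}, so there is no internal argument to match, and your sketch has to be judged on its own. Judged that way, it has a genuine gap at its central step. In a nested Cantor construction the fraction of measure you retain inside a stage-$k$ interval $J$, after extracting disjoint $\rho(N_{t_{k+1}})$-balls and shrinking them to concentric $\Psi$-balls, is of order $\Psi(N_{t_{k+1}})/\rho(N_{t_{k+1}})$. The divergence hypothesis $\sum_t \Psi(N_t)/\rho(N_t)=\infty$ is perfectly compatible with this ratio tending to $0$ (e.g.\ ratio $=1/t$), so $\lambda\parens*{\bigcup\mathcal{F}_{k+1}}\ll \frac{\Psi(N_{t_{k+1}})}{\rho(N_{t_{k+1}})}\,\lambda\parens*{\bigcup\mathcal{F}_k}$ and $\lambda\parens*{\bigcap_k\bigcup\mathcal{F}_k}=0$: the construction as described produces a null set, not a set of measure $\geq c\,\lambda(I)$. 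The only way ``grouping of indices'' can rescue this is to aggregate, within a single stage and inside the same interval, the $\Psi$-ball families attached to many different $t$'s; but then you need a lower bound for the measure of that union, i.e.\ control of the pairwise overlaps between the families for different $t$ --- exactly the estimate you set aside in your last sentence as ``merely relocating the difficulty.'' Regularity of $\rho$ by itself does not supply it: a union of sets of measures $m_t$ with $\sum_t m_t\geq \lambda(J)$ can still have measure as small as $\max_t m_t$. A sequential ``fill the remaining gaps'' variant does not obviously work either, because the ubiquity threshold $t_0$ of the ever-smaller gap intervals forces you onto a sparse subsequence of $t$'s, along which the series need not diverge.

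The missing overlap estimate is in fact the heart of the theorem, and it comes not from ubiquity used as a black box but from the structure your Vitali extraction already gives you: within block $t$ the selected centers are $\rho(N_t)$-separated, so for $s<t$ the number of block-$t$ centers inside a block-$s$ ball of radius $\Psi(N_s)$ is $\ll 1+\Psi(N_s)/\rho(N_t)$, which yields, for the trimmed block sets $A_t$ localized to $I$, a bound of the shape $\lambda(A_s\cap A_t\cap I)\ll \lambda(A_s\cap I)\,\lambda(A_t\cap I)/\lambda(I)$ plus an error term; the $\set{N_t}$-regularity of $\rho$ is what lets one pass to a subsequence along which $\rho(N_t)$ decays geometrically so that this error is harmless. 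A localized divergence Borel--Cantelli (Chung--Erd\H{o}s second-moment) inequality then gives $\lambda(\limsup_t A_t\cap I)\gg\lambda(I)$, and your Lebesgue-density step, which is fine, finishes the proof. In other words, the route you dismissed in your final sentence is the viable one --- essentially the proof in \cite{BDV} --- while the nested construction, as written, fails at the measure bookkeeping across stages.
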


Our strategy for proving the divergence part of
Theorem~\ref{thm:boundedandlinear} is to establish that
$(\QQ(P), J(P))$ is almost surely a ubiquitous system relative to a
suitable ubiquitous function $\rho$ and sequence $\set{N_t}$, and then
to apply Theorem~\ref{thm:ubiquity}. (We will use $M_t = N_{t-1}$.)

\section{Random local ubiquity when $f(n)$ is linear on average}
\label{sec:phiubiq}

The purpose of this section is to show that if the average order of
$f(n)$ is at least linear, then for $\PP_f$-almost every
$P\in\Omega_f$, the pair $(\QQ(P), J(P))$ are a locally ubiquitous
system. 

First, we prove the following equivalent for average order being at
least linear. This lemma provides the sequences relative to which we
will prove local ubiquity.

\begin{lemma}\label{lem:averageorder}
  Let $0\leq f(n) \leq n$ be an integer sequence. Its average order is
  at least linear if and only if there exists an integer sequence
  $\set{N_t = k^t}$, where $k>1$, such that the block-sums
  \begin{equation*}
    F_t := \sum_{n\in (N_t,N_{t+1}]} f(n)
  \end{equation*}
  satisfy $F_t \gg N_{t+1}^2$ for all sufficiently large
  $t$. Furthermore, one can choose $k$ large enough to ensure that
  there exists some positive $\lambda < 1$ such that
  $F_t \leq \lambda F_{t+1}$ for all sufficiently large $t$.
\end{lemma}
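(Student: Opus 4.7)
The plan is to re-express the condition ``$f$ has average order at least linear'' as the partial-sum bound $S_N := \sum_{n=1}^N f(n) \gg N^2$, and then to derive both directions of the equivalence by elementary geometric-series bookkeeping with the candidate sequence $N_t = k^t$.

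For the forward direction, I would assume $S_N \geq C N^2$ for all large $N$. For any integer $k > 1$, combining this with the trivial upper bound $S_{N_t} \leq \sum_{n \leq N_t} n \leq N_t^2$ gives
\[
F_t \;=\; S_{N_{t+1}} - S_{N_t} \;\geq\; C N_{t+1}^2 - N_t^2 \;=\; k^{2t}\parens*{Ck^2 - 1}.
\]
Taking $k^2 \geq 2/C$ then yields $F_t \geq \tfrac{C}{2} N_{t+1}^2$, as required. For the reverse direction, I would assume $F_t \gg N_{t+1}^2$ and, given any large $N$, pick $t$ with $N_t < N \leq N_{t+1} = k N_t$, so that
\[
S_N \;\geq\; S_{N_t} \;\geq\; F_{t-1} \;\gg\; N_t^2 \;\geq\; N^2 / k^2,
\]
which delivers $S_N \gg N^2$ and hence linear average order.

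For the ``furthermore'' clause, keeping the same $k$, I would combine the trivial upper bound $F_t \leq \sum_{N_t < n \leq N_{t+1}} n \leq N_{t+1}^2$ with the lower bound $F_{t+1} \geq \tfrac{C}{2} N_{t+2}^2 = \tfrac{C}{2} k^2 N_{t+1}^2$ just established, giving the ratio estimate $F_t/F_{t+1} \leq 2/(Ck^2)$. Enlarging $k$ if necessary makes this ratio a fixed constant $\lambda < 1$. The proof is routine and the only thing to track carefully is that $k$ must be chosen large enough to satisfy $Ck^2 - 1 \geq \tfrac{C}{2} k^2$ and $2/(Ck^2) < 1$ simultaneously; both conditions hold as soon as $k^2 > 2/C$, so there is no genuine obstacle beyond the constant juggling.
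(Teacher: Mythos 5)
Your proof is correct and follows essentially the same route as the paper's: the forward direction subtracts the trivial bound $S_{N_t}\leq N_t^2$ (from $f(n)\leq n$) from the assumed quadratic lower bound on $S_{N_{t+1}}$, the reverse direction uses $S_N\geq F_{t-1}\gg N_t^2\gg N^2$, and the ``furthermore'' clause combines $F_t\leq N_{t+1}^2$ with the established lower bound on $F_{t+1}$ and enlarges $k$. The only cosmetic point is to note that $k$ should be taken to be an integer, which your constant condition $k^2>2/C$ plainly allows.
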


\begin{proof}
  First, suppose that $a>0$ is such that
  \begin{equation*}
    \sum_{n=1}^N f(n) \geq \sum_{n=1}^N an = \frac{a}{2}(N^2 + N)
  \end{equation*}
  holds for all sufficiently large $N$. Let $N_t = k^t$, with $k>1$ to
  be chosen later. Then we have 
  \begin{equation*}
    \sum_{n=N_t+1}^{N_{t+1}} f(n) \geq \frac{a}{2}(N_{t+1}^2 + N_{t+1}) - \frac{1}{2}(N_t^2 + N_t) \geq \parens*{\frac{a}{2} - \frac{1}{2k^2}}N_{t+1}^2
  \end{equation*}
  for all sufficiently large $t$. Now choose $k\in\NN$ large enough
  that the parenthetical factor exceeds, say, $a/4$. This proves one
  direction of the ``if and only if'' statement.

  We can prove the ``furthermore'' statement by choosing $k$ larger
  still, if need be. Notice that
  \begin{equation*}
    F_t \leq N_{t+1}^2 = \frac{1}{k^2} N_{t+2}^2 \leq \frac{a}{4k^2}F_{t+1},
  \end{equation*}
  holds for all sufficiently large $t$, by what we have just
  proved. The ``furthermore'' statement only requires us to have
  chosen $k$ large enough that $\frac{a}{4k^2}<1$.

  We now prove the other direction of the ``if and only if.'' Suppose
  that there is an integer sequence $N_t = k^t$ such that
  $F_t \gg N_{t+1}^2$ for all sufficiently large $t$. Let
  $N_t \leq N < N_{t+1}$. Then
  \begin{equation*}
    \sum_{n=1}^N f(n) \geq \sum_{n=1}^{N_t} f(n) \gg N_t^2 \gg N^2 \gg \sum_{n=1}^N n.
  \end{equation*}
  The constant $a$ is absorbed into the the notation $\gg$.
\end{proof}

Let
\begin{equation*}
  Q_n := \set*{\frac{a}{n}\in[0,1] : (a,n)=1}, 
\end{equation*}
that is, $Q_n$ is the set of fractions in $[0,1]$ which in reduced
form have denominator $n$. The next lemma is a simple and well-known
fact, which we prove following~\cite[Lemma~1]{NiederreiterFarey}. 

\begin{lemma}\label{lem:niederreiter}
  For any non-tivial interval $I\subset[0,1]$, there exists some
  integer $n_0(I)$ such that
  $\#(Q_n\cap I) \geq \frac{1}{2}\varphi(n)\lambda(I)$ whenever
  $n\geq n_0(I)$.
\end{lemma}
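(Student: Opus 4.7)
The plan is to count the reduced fractions in $I$ exactly via Möbius inversion and then show that the main term $\varphi(n)\lambda(I)$ dominates the error for $n$ large depending on $I$. This is the standard Niederreiter-style argument.

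First, I would write $I = [\ell, r]$ and use the identity $[\gcd(a,n) = 1] = \sum_{d \mid \gcd(a,n)}\mu(d)$ to expand
\[
  \#(Q_n \cap I) = \sum_{a=1}^n [a/n \in I]\sum_{\substack{d \mid n \\ d \mid a}}\mu(d) = \sum_{d \mid n}\mu(d)\,\#\set*{b : 1 \leq b \leq n/d,\ b/(n/d) \in I}.
\]
Each inner count is the number of lattice points of the arithmetic progression $\set{1/m, 2/m, \ldots, m/m}$ falling in $I$, where $m = n/d$; it equals $m\lambda(I) + \theta_d$ with $\abs{\theta_d} \leq 1$. Summing and using $\sum_{d\mid n}\mu(d)(n/d) = \varphi(n)$ gives
\[
  \#(Q_n \cap I) = \varphi(n)\lambda(I) + E_n, \qquad \abs{E_n} \leq \sum_{d \mid n}\abs{\mu(d)} = 2^{\omega(n)}.
\]

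Next, I would observe that $2^{\omega(n)} = o(\varphi(n))$ as $n \to \infty$: one has $2^{\omega(n)} \leq d(n) = O(n^{\eps})$ for every $\eps>0$, while the well-known lower bound $\varphi(n) \gg n/\log\log n$ holds for all $n \geq 3$. Hence $2^{\omega(n)}/\varphi(n) \to 0$. For any fixed interval $I$ with $\lambda(I)>0$, this means there exists $n_0(I)$ such that $2^{\omega(n)} \leq \tfrac{1}{2}\varphi(n)\lambda(I)$ for all $n \geq n_0(I)$, and therefore
\[
  \#(Q_n \cap I) \geq \varphi(n)\lambda(I) - 2^{\omega(n)} \geq \tfrac{1}{2}\varphi(n)\lambda(I),
\]
which is the desired bound.

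There is no real obstacle here: the Möbius inversion is routine, and the only point requiring care is that the error $2^{\omega(n)}$ be controlled relative to $\varphi(n)$, which follows from standard elementary estimates. Note that $n_0$ must be allowed to depend on $\lambda(I)$, as is permitted by the statement.
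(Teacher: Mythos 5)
Your proposal is correct and follows essentially the same route as the paper: Möbius inversion to extract the main term $\varphi(n)\lambda(I)$, with the error bounded by the number of squarefree divisors of $n$ (you use $2^{\omega(n)}=O(n^\eps)$, the paper uses the cruder $O(\sqrt{n})$), and then the observation that this error is $o(\varphi(n))$, so that it is eventually below $\tfrac{1}{2}\varphi(n)\lambda(I)$.
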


\begin{proof}
  Let $\theta_I(n) = \#\set*{a/n\in I}$, and notice that
  $\floor{\lambda(I)n} \leq \theta_I(n)\leq \floor{\lambda(I)n}+1$. We have
  \begin{equation*}
    \theta_I(n) = \sum_{d\mid n} \#(Q_d\cap I),
  \end{equation*}
  and the M{\"o}bius inversion formula gives
  \begin{align*}
    \#(Q_n\cap I) &= \sum_{d\mid n} \mu\parens*{\frac{n}{d}}\theta_I(d)\\
                  &\geq \sum_{d\mid n} \mu\parens*{\frac{n}{d}}\lambda(I)d - \sum_{d\mid n} \mu\parens*{\frac{n}{d}}\set{\lambda(I)d} \\
                  &= \varphi(n)\lambda(I)- \sum_{d\mid n} \mu\parens*{\frac{n}{d}}\set{\lambda(I)d}\\
                  &= \varphi(n)\lambda(I) + O(\sqrt{n}). 
  \end{align*}
  The lemma follows, since $\sqrt{n}/\varphi(n)\to 0, (n\to\infty)$.
\end{proof}

The next lemma is a simple geometric result which is only used in the
lemma that follows it.

\begin{lemma}\label{lem:normcomparison}
  Let $c_1, c_2>0$. For any $x = (x_1, \dots, x_d)\in \ZZ^{d}$ such
  that $x_i^2 \leq c_1 d, (i=1, \dots, d)$ and
  $\abs{x}_2\geq d\sqrt{c_2}$, we have
  $\abs{x}_1\gg \sqrt{d}\, \abs{x}_2$, with an implied constant only
  depending on $c_1$ and $c_2$.
\end{lemma}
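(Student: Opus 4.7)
The idea is the following simple observation: under the coordinatewise upper bound $x_i^2 \leq c_1 d$, each term in the sum $\sum_i x_i^2$ can be converted to a contribution to $\sum_i \abs{x_i}$ at the cost of the factor $\sqrt{c_1 d}$. This will let us compare $\abs{x}_2^2$ with $\abs{x}_1$, and then the hypothesis $\abs{x}_2 \geq d\sqrt{c_2}$ will be used to replace one factor of $\abs{x}_2$ by $\sqrt{d}$ (up to a constant depending only on $c_1, c_2$).

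More precisely, first I would note that for each coordinate we have $\abs{x_i}^2 \leq \sqrt{c_1 d}\, \abs{x_i}$, since the hypothesis gives $\abs{x_i} \leq \sqrt{c_1 d}$. Summing over $i$, this yields
\begin{equation*}
\abs{x}_2^2 \;=\; \sum_{i=1}^d \abs{x_i}^2 \;\leq\; \sqrt{c_1 d}\,\sum_{i=1}^d \abs{x_i} \;=\; \sqrt{c_1 d}\,\abs{x}_1,
\end{equation*}
equivalently $\abs{x}_1 \geq \abs{x}_2^2/\sqrt{c_1 d}$. Factoring out one copy of $\abs{x}_2$ and applying the lower bound $\abs{x}_2 \geq d\sqrt{c_2}$ to the other copy gives
\begin{equation*}
\abs{x}_1 \;\geq\; \frac{\abs{x}_2}{\sqrt{c_1 d}} \cdot \abs{x}_2 \;\geq\; \frac{\abs{x}_2}{\sqrt{c_1 d}}\cdot d\sqrt{c_2} \;=\; \sqrt{\frac{c_2}{c_1}}\,\sqrt{d}\,\abs{x}_2,
\end{equation*}
which is the desired estimate, with an implied constant $\sqrt{c_2/c_1}$ depending only on $c_1$ and $c_2$.

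There is no real obstacle here: the only point to watch is that the integrality of $x$ plays no role, and that we do not need a lower bound on the support size explicitly, because the two-step conversion $\abs{x_i}^2 \leq \sqrt{c_1 d}\,\abs{x_i}$ already encodes the right dependence on $d$. The role of the hypothesis $\abs{x}_2 \geq d\sqrt{c_2}$ is simply to guarantee that $\abs{x}_2$ is large enough to absorb the $\sqrt{d}$ in the denominator; without it, one would only get $\abs{x}_1 \gg \abs{x}_2^2/\sqrt{d}$, which is weaker than what is claimed.
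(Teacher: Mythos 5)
Your proof is correct, and it takes a genuinely different (and more streamlined) route than the paper. The paper first uses the two hypotheses to lower-bound the number of nonzero coordinates of $x$ by $(c_2/c_1)d$, and then invokes a geometric extremal argument — identifying the configuration minimizing $\abs{x}_1/\abs{x}_2$ subject to the constraints and computing that ratio to be $\gg\sqrt{d}$. You instead use the Hölder-type inequality $\abs{x}_2^2\leq\max_i\abs{x_i}\cdot\abs{x}_1\leq\sqrt{c_1d}\,\abs{x}_1$ and then spend the hypothesis $\abs{x}_2\geq d\sqrt{c_2}$ on one of the two factors of $\abs{x}_2$, obtaining $\abs{x}_1\geq\sqrt{c_2/c_1}\,\sqrt{d}\,\abs{x}_2$ directly. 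What your approach buys: it is a two-line computation with an explicit (and in fact sharp, as seen by taking $\lceil(c_2/c_1)d\rceil$ coordinates equal to $\sqrt{c_1d}$) constant $\sqrt{c_2/c_1}$, it bypasses both the support-size count and the somewhat informal minimization step in the paper, and it makes transparent that integrality of $x$ is irrelevant. The paper's argument, by contrast, isolates the combinatorial fact that many coordinates are nonzero, which is conceptually suggestive but not needed for the inequality itself. Your closing remarks about the roles of the hypotheses are also accurate.
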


\begin{proof}
  On one hand we have $x_1^2 + \dots + x_d^2 \geq c_2 d^2$, and on the
  other, we have that $x_i^2 \leq c_1 d$ for $i=1, \dots, d$.
  Together, these facts imply that the number of non-zero entries of
  $x = (x_1, \dots, x_d)$ is at least $(c_2/c_1)d$.

  Now, since balls in the $\abs{\cdot}_1$-norm achieve their maximal
  $\abs{\cdot}_2$-norms on the coordinate axes, the ratio
  $\frac{\abs{x}_1}{\abs{x}_2}$ will be minimized when all but
  $\ceil{(c_2/c_1)d}$ coordinates of $x$ are $0$,
  $\ceil{(c_2/c_1)d}-1$ are $1$, and the remaining coordinate is
  $\sqrt{c_1d}$. For such an $x$, we will have
  \begin{equation*}
    \frac{\abs{x}_1^2}{\abs{x}_2^2} = \frac{(\ceil{(c_2/c_1) d} - 1+ \sqrt{c_1d})^2}{(\ceil{(c_2/c_1) d} - 1+ c_1d)} \gg d,
  \end{equation*}
  which proves the claim.
\end{proof}

\begin{lemma}\label{lem:sumfphi}
  Suppose $0\leq f(n) \leq n$ for all $n$, and $\set{N_t = k^t}$ is
  a geometric sequence such that  
  \begin{equation*}
    \sum_{n\in(N_t, N_{t+1}]}f(n) \gg N_{t+1}^2
  \end{equation*}
  for sufficiently large $t$. Then
  \begin{equation*}
    \sum_{n\in(N_t, N_{t+1}]} \frac{f(n)\varphi(n)}{n} \gg \sum_{n\in(N_t, N_{t+1}]} f(n)
  \end{equation*}
  for large $t$.
\end{lemma}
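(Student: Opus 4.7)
The plan is to show that $f$ cannot place too much of its mass on integers $n$ for which $\varphi(n)/n$ is small, because such integers form a sparse set while the constraint $f(n)\leq n$ limits how much mass $f$ can put on any thin set. Let $I_t := (N_t, N_{t+1}]$ and write $S_t := \sum_{n\in I_t} f(n)$ and $T_t := \sum_{n\in I_t} f(n)\varphi(n)/n$. The hypothesis gives $S_t \gg N_{t+1}^2$, while $f(n)\leq n$ yields the trivial bound $S_t \leq N_{t+1}^2$, so it suffices to prove $T_t \gg N_{t+1}^2$.

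The central ingredient I would establish is the density estimate
\begin{equation*}
  \#\set*{n \leq N : \varphi(n)/n < \eps} \;\ll\; \frac{N}{\log(1/\eps)},
\end{equation*}
valid for all $N\in\NN$ and all sufficiently small $\eps>0$. To prove it, observe that $\varphi(n)/n < \eps$ forces $\sum_{p\mid n} 1/p \geq \log(1/\eps) - O(1)$ via the identity $\log(n/\varphi(n)) = \sum_{p\mid n} \log(p/(p-1))$ together with the convergence of $\sum_p 1/(p(p-1))$. Because $\sum_{n\leq N} \sum_{p\mid n} 1/p = \sum_p (1/p)\lfloor N/p\rfloor \leq N\sum_p 1/p^2 \ll N$, Markov's inequality yields the claim.

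Setting $B_\eps := \set*{n\in\NN : \varphi(n)/n < \eps}$ and using $f(n)\leq n \leq N_{t+1}$,
\begin{equation*}
  \sum_{n\in I_t\cap B_\eps} f(n) \;\leq\; N_{t+1}\cdot \#\bigl(B_\eps \cap [1, N_{t+1}]\bigr) \;\ll\; \frac{N_{t+1}^2}{\log(1/\eps)}.
\end{equation*}
Now fix $\eps>0$ small enough that the right-hand side is at most $S_t/2$ for all large $t$; this is possible because the implicit constant in $S_t \gg N_{t+1}^2$ is independent of $t$. Then $\sum_{n\in I_t\setminus B_\eps} f(n) \geq S_t/2$, and since $\varphi(n)/n\geq \eps$ on this set,
\begin{equation*}
  T_t \;\geq\; \eps \sum_{n\in I_t\setminus B_\eps} f(n) \;\geq\; (\eps/2)\, S_t,
\end{equation*}
which is the desired conclusion.

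The main obstacle is the density estimate; once it is in hand the rest is a straightforward split-and-compare argument. An alternative, more combinatorial extraction of the same ``non-concentration'' phenomenon for $f$ can likely be made via Lemma~\ref{lem:normcomparison}, exploiting integrality to force a lower bound on the number of $n\in I_t$ on which $f(n)$ is appreciable; this seems to be the author's intended route, but the Markov-type argument above is direct and does not require the geometric input.
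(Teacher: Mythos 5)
Your proof is correct, and it takes a genuinely different route from the paper's. The paper proves the lemma by Cauchy--Schwarz (stated as a reverse H\"older inequality), writing $\sum_{n\in I_t} \frac{f(n)\varphi(n)}{n} \geq \bigl(\sum_{n\in I_t} f(n)^{1/2}\bigr)^2\bigl(\sum_{n\in I_t} \frac{n}{\varphi(n)}\bigr)^{-1}$, then bounding the first factor below by $(N_{t+1}-N_t)\sum_{n\in I_t}f(n)$ via the geometric Lemma~\ref{lem:normcomparison} (so your closing guess about the intended route is right), and bounding the second factor by the mean-value asymptotic $\sum_{n\leq N} n/\varphi(n) = \frac{315\zeta(3)}{2\pi^4}N + O(\log N)$ quoted from Harman. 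You instead exploit the same non-concentration phenomenon through a level-set argument: the set $B_\varepsilon=\{n:\varphi(n)/n<\varepsilon\}$ has density $\ll 1/\log(1/\varepsilon)$, which you derive correctly from $\log(n/\varphi(n))=\sum_{p\mid n}\log\frac{p}{p-1}\leq\sum_{p\mid n}\frac1p+O(1)$ together with the first-moment bound $\sum_{n\leq N}\sum_{p\mid n}\frac1p\ll N$ and Markov; since $f(n)\leq n\leq N_{t+1}$ and the hypothesis makes $S_t$ a positive proportion of the maximal value $N_{t+1}^2$ with a $t$-independent constant, the $f$-mass on $I_t\cap B_\varepsilon$ can be made at most $S_t/2$ by a fixed choice of $\varepsilon$, and the remaining mass contributes at least $\varepsilon S_t/2$. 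What each approach buys: yours is more elementary and self-contained (it needs only the convergence of $\sum_p p^{-2}$, not the precise constant for $\sum n/\varphi(n)$, and it bypasses Lemma~\ref{lem:normcomparison} entirely), while the paper's version reuses machinery it needs anyway and yields a cleaner explicit dependence of the implied constant on $k$ and the hypothesis constant; both crucially use that $\sum_{n\in I_t}f(n)\gg N_{t+1}^2$, without which the conclusion can fail, as the paper's remark about $f$ supported on $\{N_t\}$ indicates.
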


\begin{remark}
  Before stating the proof, we mention that the converse fails. For
  example, if $f$ is supported on $\set{N_t}$, then the conclusion
  holds, yet the assumption does not. No matter. We state and prove it
  in the context where we will need it.
\end{remark}

\begin{proof}
  By the reverse H{\"o}lder inequality, we have
  \begin{equation*}
    \sum_{n\in(N_t, N_{t+1}]} \frac{f(n)\varphi(n)}{n} \geq \parens*{\sum_{n\in(N_t, N_{t+1}]} \parens{f(n)}^{\frac{1}{2}}}^2\parens*{\sum_{n\in(N_t, N_{t+1}]} \parens*{\frac{\varphi(n)}{n}}^{-1}}^{-1}.
  \end{equation*}
  We have $N_{t+1} = \frac{k}{k-1}(N_{t+1}-N_t)$ for all $t$, so the
  first factor above can be handled by Lemma~\ref{lem:normcomparison},
  with
  \begin{equation*}
    x=(f(N_t+1)^{1/2}, \dots, f(N_{t+1})^{1/2})
  \end{equation*}
  and $d = N_{t+1}-N_t$.  We get
\begin{equation}
  \sum_{n\in(N_t, N_{t+1}]} \frac{f(n)\varphi(n)}{n} \gg (N_{t+1}-N_t) \parens*{\sum_{n\in(N_t, N_{t+1}]} \parens{f(n)}}\parens*{\sum_{n\in(N_t, N_{t+1}]} \parens*{\frac{\varphi(n)}{n}}^{-1}}^{-1} \label{eq:comeback}
  \end{equation}
  And, it is a fact (found, for example, in~\cite[Lemma~2.5]{Harman})
  that
\begin{equation*}
  \sum_{n\in(0,N]} \parens*{\frac{n}{\varphi(n)}} = \frac{315\zeta(3)}{2\pi^4} N + O(\log N).
\end{equation*}
  In particular, 
  \begin{equation}\label{eq:n/phi}
    \sum_{n\in(N_t,N_{t+1}]} \parens*{\frac{n}{\varphi(n)}} = \frac{315\zeta(3)}{2\pi^4} (N_{t+1}-N_t) + O(\log N_{t+1}).
  \end{equation}
  The lemma follows on combining~(\ref{eq:comeback})
  and~(\ref{eq:n/phi}).
\end{proof}

\begin{lemma}\label{lem:fareycoupons}
  Let $0\leq f(n)\leq n$ be a sequence which is at least linear on
  average, and let $\set{N_t = k^t}$ be a geometric sequence as in
  Lemma~\ref{lem:averageorder}. For each $n\in (N_t, N_{t+1}]$,
  uniformly and independently choose an $f(n)$-element subset
  $P_n \subset [n]$. For an interval $I\subset [0,1]$,
  let
  \begin{equation*}
    Y_n(I) = \#(\QQ(P_n)\cap Q_n \cap I)\qquad\textrm{and}\qquad X_t(I) = \sum_{n\in(N_t, N_{t+1}]} Y_n(I). 
  \end{equation*}
  Then there exist constants $C_1, C_2 >0$ depending only on $f$ and
  $\set{N_t}$ such that for any interval $I\subset [0,1]$ there exists
  $t_0:=t_0(I)$, such that
  \begin{align}
    \EE(X_t(I)) &\geq C_1 F_t \lambda(I) \label{eq:expectation}
\intertext{and}
\sigma^2 (X_t(I)) &\leq C_2 F_t  \label{eq:variance}
  \end{align}
  whenever $t\geq t_0$.
\end{lemma}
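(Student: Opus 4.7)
The plan is to compute the first two moments of $X_t(I)$ directly, exploiting the independence of the random sets $P_n$ across different $n$, and then to feed in the arithmetic estimates of Lemma~\ref{lem:niederreiter} and Lemma~\ref{lem:sumfphi}.

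For the expectation lower bound~(\ref{eq:expectation}), first expand
\[
Y_n(I) \;=\; \sum_{\substack{a\in[n]\\ (a,n)=1\\ a/n\in I}} \mathbf{1}_{\{a\in P_n\}}.
\]
Since $P_n$ is uniform among the $f(n)$-element subsets of $[n]$, each fixed $a$ lies in $P_n$ with probability $f(n)/n$, so linearity gives $\EE(Y_n(I)) = (f(n)/n)\cdot \#(Q_n\cap I)$. Lemma~\ref{lem:niederreiter} then furnishes $\EE(Y_n(I)) \geq \tfrac{1}{2}\,\lambda(I)\,f(n)\varphi(n)/n$ whenever $n \geq n_0(I)$. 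Choosing $t_0 = t_0(I)$ so that $N_{t_0} > n_0(I)$ and summing over $n\in(N_t,N_{t+1}]$, Lemma~\ref{lem:sumfphi} converts the resulting sum into $\gg \lambda(I)\,F_t$, delivering~(\ref{eq:expectation}) with a constant $C_1$ depending only on $f$ and $\{N_t\}$.

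For the variance bound~(\ref{eq:variance}), since the sets $P_n$ are chosen independently across $n$, so are the variables $Y_n(I)$, whence
\[
\sigma^2(X_t(I)) \;=\; \sum_{n\in(N_t,N_{t+1}]} \sigma^2(Y_n(I)).
\]
For fixed $n$, $Y_n(I)$ counts how many elements of the deterministic set $A_n := \{a\in[n] : (a,n)=1,\ a/n\in I\}$ (of size $\#(Q_n\cap I)$) lie in a uniformly chosen $f(n)$-subset of $[n]$; this is a hypergeometric random variable. The classical formula
\[
\sigma^2(Y_n(I)) \;=\; f(n)\cdot\frac{|A_n|}{n}\cdot\frac{n-|A_n|}{n}\cdot\frac{n-f(n)}{n-1}
\]
yields $\sigma^2(Y_n(I)) \leq f(n)/4$, since the two middle factors multiply to at most $1/4$ and the last is at most $1$. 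Summing gives $\sigma^2(X_t(I)) \leq F_t/4$, which is~(\ref{eq:variance}) with $C_2 = 1/4$.

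The only non-elementary input is Lemma~\ref{lem:sumfphi}, which is precisely what absorbs the $\varphi(n)/n$ loss inherited from Lemma~\ref{lem:niederreiter} in the expectation calculation; once that lemma is in hand, the remaining second-moment computation is routine, so there is no substantial obstacle left here. Both estimates are uniform in $I$ in the sense that the constants $C_1, C_2$ do not depend on $I$, although the threshold $t_0$ does.
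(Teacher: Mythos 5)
Your proposal is correct and follows essentially the same route as the paper: compute $\EE(Y_n(I))$ via the hypergeometric (urn) model, lower-bound $\#(Q_n\cap I)$ by Lemma~\ref{lem:niederreiter}, convert the sum of $f(n)\varphi(n)/n$ into $\gg F_t$ via Lemma~\ref{lem:sumfphi}, and bound the variance of each $Y_n(I)$ by $f(n)/4$ and sum using independence, giving $C_2=1/4$. The only cosmetic difference is that you write out the exact hypergeometric variance formula where the paper simply cites the standard upper bound from Feller.
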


\begin{proof}
  The lemma is essentially a classic urn problem from probability
  theory. There are $n$ marbles in an urn, of which $\#(Q_n\cap I)$
  are distinquished. It is well-known that if we draw $f(n)$ marbles
  from this urn, then the expected number of distinguished marbles
  drawn is
  \begin{equation}\label{eq:exp}
    \frac{f(n) \#(Q_n\cap I)}{n},
  \end{equation}
  and the variance is bounded above by
  \begin{equation}\label{eq:var}
    \frac{f(n) \#(Q_n\cap I) (n-\#(Q_n\cap I))}{n^2}.
  \end{equation}
  (See, for example,~\cite[Example IX.5(d)]{Feller}.)

  By Lemma~\ref{lem:niederreiter}, we have
  $\#(Q_n\cap I) \geq \frac{1}{2}\varphi(n)\lambda(I)$ for all
  $n\geq n_0(I)$.  Combining with~(\ref{eq:exp}), we find
  $\EE(Y_n(I))\geq \frac{f(n)\varphi(n)}{2n}\lambda(I)$ for all
  $n\geq n_0 (I)$.  Therefore,
  \begin{equation*}
    \EE(X_t(I)) \geq \sum_{n\in(N_t,N_{t+1}]}\frac{f(n)\varphi(n)}{2n}\lambda(I),
  \end{equation*}
  for large $t$. Finally, we appeal to Lemma~\ref{lem:sumfphi} to
  finish the proof of~(\ref{eq:expectation}).

  For the variance note that~(\ref{eq:var}) is bounded above by
  $\frac{f(n)}{4}$, so that
  $\sigma^2(Y_n(I))\leq \frac{f(n)}{4}$. It follows that
  \begin{equation*}
    \sigma^2(X_t(I)) \leq \frac{1}{4} \sum_{n\in(N_t, N_{t+1}]}f(n),
  \end{equation*}
  which proves~(\ref{eq:variance}) with $C_2 = 1/4$. 
\end{proof}

\begin{proposition}\label{prop:fareycoupons}
  Let $f(n)$ and $\set{N_t}$ be as in
  Lemma~\ref{lem:fareycoupons}. Then there exists a constant
  $\kappa'>0$ such that for almost every $P\in\Omega_f$, the following
  holds: for every non-trivial interval $I\subset[0,1]$, there exists
  $t_0(I,P)$ such that
  \begin{equation*}
    \#\parens*{\bigcup_{N_t < n \leq N_{t+1}}\QQ(P_n)\cap I} \geq \kappa' F_t\lambda(I)
  \end{equation*}
  for all $t\geq t_0(I, P)$.
\end{proposition}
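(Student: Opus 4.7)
The plan is to reduce the lower bound on the cardinality of the union to the random variable $X_t(I)$ studied in Lemma~\ref{lem:fareycoupons}, and then to apply Chebyshev's inequality together with the Borel--Cantelli lemma. The key observation is that any reduced fraction belongs to exactly one $Q_n$, so the sets $\QQ(P_n)\cap Q_n$ are pairwise disjoint as $n$ varies. Hence
\begin{equation*}
  \#\parens*{\bigcup_{N_t < n \leq N_{t+1}}\QQ(P_n)\cap I} \;\geq\; \sum_{N_t<n\leq N_{t+1}} \#\parens*{\QQ(P_n)\cap Q_n\cap I} \;=\; X_t(I),
\end{equation*}
and it suffices to show that, $\PP_f$-almost surely, $X_t(I) \geq \kappa' F_t \lambda(I)$ for all sufficiently large $t$, simultaneously over all non-trivial intervals $I\subset[0,1]$.

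First I would fix a single interval $I$ and apply Chebyshev's inequality to $X_t(I)$ with the deviation $y = \tfrac{1}{2}\EE(X_t(I))$. Using the moment estimates~(\ref{eq:expectation}) and~(\ref{eq:variance}), this gives
\begin{equation*}
  \PP_f\parens*{X_t(I) < \tfrac{C_1}{2} F_t\lambda(I)} \;\leq\; \PP_f\parens*{X_t(I) < \tfrac{1}{2}\EE(X_t(I))} \;\leq\; \frac{4C_2}{C_1^2\,\lambda(I)^2\,F_t}
\end{equation*}
for all $t$ beyond the deterministic threshold $t_0(I)$ supplied by Lemma~\ref{lem:fareycoupons}. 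Lemma~\ref{lem:averageorder} forces $F_t \gg N_{t+1}^2 = k^{2(t+1)}$, which grows geometrically in $t$; hence $\sum_t 1/F_t < \infty$, and Borel--Cantelli produces a $\PP_f$-null set $E_I$ outside of which $X_t(I) \geq \tfrac{C_1}{2}F_t\lambda(I)$ for all sufficiently large $t$.

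The main obstacle is that the exceptional null set $E_I$ depends on $I$ and there are uncountably many intervals. I would resolve this in the standard way by fixing a countable family $\mathcal I$ of intervals with rational endpoints that is dense in the collection of all sub-intervals of $[0,1]$, and setting $E = \bigcup_{J\in\mathcal I} E_J$, which is still $\PP_f$-null. For any $P\notin E$ and any non-trivial interval $I\subset[0,1]$, I would choose $J\in\mathcal I$ with $J\subset I$ and $\lambda(J) \geq \tfrac{1}{2}\lambda(I)$. Monotonicity of the counting function in the interval gives $X_t(I) \geq X_t(J)$, so for all $t$ exceeding the random threshold supplied by Borel--Cantelli applied to $J$,
\begin{equation*}
  X_t(I) \;\geq\; X_t(J) \;\geq\; \tfrac{C_1}{2} F_t\,\lambda(J) \;\geq\; \tfrac{C_1}{4} F_t\,\lambda(I),
\end{equation*}
establishing the proposition with $\kappa' = C_1/4$.
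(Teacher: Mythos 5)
Your proposal is correct and follows essentially the same route as the paper: bound the count below by $X_t(I)$, apply Chebyshev with the moment estimates of Lemma~\ref{lem:fareycoupons}, use Borel--Cantelli (with $\sum_t F_t^{-1}<\infty$), and pass from rational-endpoint intervals to arbitrary ones. You merely spell out two details the paper leaves implicit -- the disjointness of the sets $\QQ(P_n)\cap Q_n$ behind the inequality $\#(\bigcup_n \QQ(P_n)\cap I)\geq X_t(I)$, and the factor-of-two approximation $J\subset I$ yielding a uniform $\kappa'=C_1/4$ -- both of which are fine.
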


\begin{proof}
  Let $I\subset[0,1]$ be a non-trivial interval. Notice that
  \begin{equation}\label{eq:notice}
    \#\parens*{\bigcup_{N_t < n \leq N_{t+1}}\QQ(P_n)\cap I} \geq X_t (I),
  \end{equation}
  so that it will suffice to show that
  $X_t(I) \geq \kappa' F_t \lambda(I)$. As in the proof of
  Lemma~\ref{lem:P(L)=1}, Chebyshev's inequality implies that
  \begin{equation*}
    \PP_f\parens*{X_t(I) < \frac{1}{2}\EE(X_t(I))} \leq 4\frac{\sigma^2(X_t(I))}{\EE(X_t(I))^2}.
  \end{equation*}
  Then, from Lemma~\ref{lem:fareycoupons}, we have
  \begin{equation*}
    \PP_f\parens*{X_t(I) < \frac{1}{2}\EE(X_t(I))} \ll \frac{1}{F_t}, 
  \end{equation*}
  and since $\sum_t F_t^{-1}$ converges as $t\to\infty$, we find by
  the Borel--Cantelli Lemma that the probability is zero that
  $X_t(I) < \frac{1}{2}\EE(X_t(I))$ for infinitely many $t$. In other
  words, there is a full-measure subset $U_I\subset\Omega_f$ such that
  for every $P\in U_I$, we have
  $X_t(I) \geq \frac{1}{2}\EE(X_t(I)) \geq C F_t\lambda(I)$ for all
  sufficiently large $t$, by Lemma~\ref{lem:fareycoupons}.

Let 
  \begin{equation*}
    U = \bigcap_{I \textrm{ with endpoints in } \QQ} U_I. 
  \end{equation*}
  Then $\PP_f(U)=1$. For every $P\in U$, there exists $\kappa'>0$ such
  that for any interval $I\subset[0,1]$, we have
  $X_t(I) \geq \kappa' F_t \lambda(I)$. The proposition is proved, in
  view of~(\ref{eq:notice}).
\end{proof}

\begin{proposition}[Random local ubiquity]\label{prop:randomlocalubiquity}
  Suppose $0\leq f(n)\leq n$ is a sequence whose average order is at
  least linear, and let $\set{N_t}$ be as in
  Lemma~\ref{lem:averageorder}. Then for $\PP_f$-almost every
  $\calP\in\Omega_f$ there exists a constant $\kappa>0$ such that for
  any interval $I\subset\RR$, there exists $t_0:=t_0(I)$ with
  \begin{equation*}
    \lambda\parens*{I\cap\bigcup_{N_t < n \leq N_{t+1}} \bigcup_{a\in\calP_n}B\parens*{\frac{a}{n}, \frac{1}{F_t}}} \geq \kappa \lambda(I) \quad\textrm{for all } t\geq t_0.
  \end{equation*}
  That is, $(\QQ(\calP), J(P))$ is almost surely a locally
  ubiquitous system relative to
  $\parens*{\rho, \set{N_t}}$, where $\rho(n) = F_t^{-1}$
  whenever $n\in (N_t, N_{t+1}]$.
\end{proposition}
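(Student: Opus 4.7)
The plan is to upgrade the counting estimate from Proposition~\ref{prop:fareycoupons} into a measure estimate via a bounded-multiplicity argument. I would work on the full-measure set $U \subset \Omega_f$ produced by that proposition, fix $\calP \in U$ and a non-trivial interval $I \subset [0,1]$. For all $t$ beyond a threshold $t_0(I, \calP)$, Proposition~\ref{prop:fareycoupons} supplies at least $\kappa' F_t \lambda(I)$ distinct reduced fractions $a/n$ with $a \in \calP_n$, $(a,n)=1$, and $n \in (N_t, N_{t+1}]$ lying in $I$. The balls $B(a/n, 1/F_t)$ around these reduced fractions form a subfamily of the balls appearing in the proposition, so it suffices to bound the measure of their union inside $I$ from below.

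The key geometric input is the classical Farey spacing estimate: any two distinct reduced fractions with denominators $\leq N_{t+1}$ differ by at least $1/N_{t+1}^2$. By Lemma~\ref{lem:averageorder}, $F_t \gg N_{t+1}^2$, so the diameter $2/F_t$ of each of our balls is $\ll 1/N_{t+1}^2$. Consequently an interval of length $2/F_t$ contains only $O(1)$ of the reduced fractions in question, and hence every point of $[0,1]$ lies in at most a bounded number $C$ of the balls $B(a/n, 1/F_t)$, where $C$ depends only on $f$ and $\{N_t\}$.

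With this multiplicity bound in hand, a direct layered-area computation gives
\begin{equation*}
  C \, \lambda\biggl(I \cap \bigcup_{(a,n)} B\bigl(a/n,\, 1/F_t\bigr)\biggr) \;\geq\; \sum_{(a,n)} \lambda\bigl(I \cap B(a/n,\, 1/F_t)\bigr),
\end{equation*}
where the sum and union range over the reduced fractions in $I$ described above. For $t$ large enough that $2/F_t < \lambda(I)$, each summand on the right is at least $1/F_t$ (since the center lies in $I$), so the right-hand side is at least $\kappa' F_t \lambda(I) \cdot (1/F_t) = \kappa' \lambda(I)$; the case $\lambda(I) \leq 2/F_t$ is trivial since then a single ball already covers $I$. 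Taking $\kappa = \kappa'/C$ yields the stated ubiquity inequality. The only nontrivial step is the multiplicity bound, and it is delivered cleanly by combining Farey spacing with the inequality $F_t \gg N_{t+1}^2$ from Lemma~\ref{lem:averageorder}; no further probabilistic input beyond Proposition~\ref{prop:fareycoupons} is needed.
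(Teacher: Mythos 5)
Your argument is correct and takes essentially the same route as the paper: both proofs combine the counting bound of Proposition~\ref{prop:fareycoupons} with the Farey-type spacing $\geq N_{t+1}^{-2}$ between the rational centers and the inequality $F_t \gg N_{t+1}^2$ from Lemma~\ref{lem:averageorder}. The only cosmetic difference is that the paper shrinks the balls to radius $\tfrac{1}{2}N_{t+1}^{-2} \leq F_t^{-1}$ so that they are pairwise disjoint, whereas you keep radius $F_t^{-1}$ and absorb the overlaps into a bounded-multiplicity constant $C$, arriving at the same conclusion with $\kappa = \kappa'/C$.
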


\begin{proof}
  By Proposition~\ref{prop:fareycoupons}, for almost every
  $P\in\Omega_f$ there exists a constant $\kappa'>0$ such that for any
  interval $I\subset [0,1]$, there exists $t_0 := t_0(I,P)$ such that
  \begin{equation*}
    \#\parens*{I\cap\bigcup_{N_t < n \leq N_{t+1}} \QQ(P_n)} \geq \kappa' F_t \lambda(I) \quad\textrm{for all } t\geq t_0,
  \end{equation*}
  and from the assumption in this proposition, we may safely assume
  that we have $F_t \geq a N_{t+1}^2\, (a>0),$ for $t\geq t_0$, too.
  Recall that the distance between any two points in $\calF_{N_{t+1}}$
  is greater than $N_{t+1}^{-2}$. Therefore, if we center an interval
  of radius $F_t^{-1}$ around every point of 
  \begin{equation*}
   I\cap\bigcup_{N_t < n \leq N_{t+1}} \QQ(P_n),
  \end{equation*}
  the measure of the resulting union of open intervals will be bounded
  below by in the following way: 
  \begin{multline*}
    \lambda\parens*{I\cap\bigcup_{N_t < n \leq N_{t+1}} \QQ(P_n) + \parens*{-\frac{1}{F_t},\frac{1}{F_t}}} \\ \geq \lambda\parens*{I\cap\bigcup_{N_t < n \leq N_{t+1}} \QQ(P_n) + \parens*{-\frac{1}{2N_{t+1}^2},\frac{1}{2N_{t+1}^2}}}\\ \geq \kappa' \lambda(I) \frac{F_t}{N_{t+1}^2} \geq a\kappa' \lambda(I),
  \end{multline*}
  as long as $t\geq t_0(I,P)$. This proves the proposition.
\end{proof}


\section{Random local ubiquity when $f(n)$ is positive and bounded on average}
\label{sec:bounded}

The purpose of this section is to show that if the average order of
$f(n)$ is positive and bounded, then for $\PP_f$-almost every
$P\in\Omega_f$, the pair $(\QQ(P), J(P))$ are a locally ubiquitous
system.

The following lemma is analogous to Lemma~\ref{lem:averageorder}. 

\begin{lemma}\label{lem:boundedonaverage}
  Let $0\leq f(n) \leq n$ be an integer sequence. Its average order is
  bounded and positive if and only if there exists an integer sequence
  $\set{N_t = k^t}$, where $k>1$, such that $N_{t+1}\ll F_t \ll N_t$
  for all sufficiently large $t$. Furthermore, one can choose $k$ large
  enough to ensure that there exists some positive $\lambda < 1$ such
  that $F_t \leq \lambda F_{t+1}$ for all sufficiently large $t$.
\end{lemma}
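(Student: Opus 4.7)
My plan is to adapt the proof of Lemma~\ref{lem:averageorder} directly, replacing its quadratic estimates with linear ones. ``Positive and bounded average order'' means there exist constants $0 < a \leq b$ with $aN \leq \sum_{n=1}^N f(n) \leq bN$ for all sufficiently large $N$, so a geometric sequence $N_t = k^t$ should produce block sums satisfying $F_t \asymp N_t \asymp N_{t+1}$---precisely what the inequalities $N_{t+1} \ll F_t \ll N_t$ demand.

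For the forward direction, I would fix $k > 1$ to be specified later, and bound $F_t$ as a difference of two consecutive partial sums. An upper bound of the form $bN_{t+1}$ is immediate and yields $F_t \ll N_t$; a lower bound of the form $aN_{t+1} - bN_t$ simplifies to a positive multiple of $N_t$, and hence of $N_{t+1}$, once $k$ is chosen larger than a suitable threshold depending on $a$ and $b$. For the reverse direction, I would pick any $N$ with $N_t \leq N < N_{t+1}$ and bound $\sum_{n=1}^N f(n)$ above by the telescoping sum $\sum_{s \leq t} F_s \ll \sum_{s \leq t} N_s$, which collapses to $\ll N_t \ll N$ by a geometric series argument; and below by a single block $F_{t-1} \gg N_t \gg N/k$.

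The ``furthermore'' clause is handled by the same estimates: since $F_t \ll N_t$ and $F_{t+1} \gg N_{t+2} = k^2 N_t$, the ratio $F_t / F_{t+1}$ is bounded by a constant times $k^{-2}$, which falls below any desired $\lambda < 1$ for sufficiently large $k$. I expect no substantive difficulty; the argument is a straightforward adaptation of Lemma~\ref{lem:averageorder}, in fact somewhat simpler because one works in the linear rather than the quadratic regime. The only point requiring any care is that the various thresholds on $k$---for the lower bound on $F_t$ and for the lacunarity ratio---must be satisfied simultaneously, which is achieved by taking $k$ larger than a maximum of finitely many constants depending only on $a$ and $b$.
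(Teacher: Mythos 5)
Your argument is essentially identical to the paper's proof: the forward direction bounds $F_t$ by the difference of consecutive partial sums with $k$ chosen larger than a threshold depending on $a,b$, the reverse direction uses a single block for the lower bound and a geometric telescoping sum for the upper bound, and the ``furthermore'' clause follows by enlarging $k$. One small quibble: since the implied constant in $F_t \ll N_t$ itself grows like $k$ (the clean bound is $F_t \leq bN_{t+1}$), the lacunarity ratio is really $\ll k^{-1}$ rather than $k^{-2}$, but either way it tends to $0$ as $k\to\infty$, so the conclusion is unaffected.
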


\begin{proof}
  First suppose $0 < a< b$ and $aN \leq \sum_{n=1}^Nf(n) \leq bN$
  for all sufficiently large $N$. Let $k > b/a$ be an integer and let
  $N_t = k^t$. Note then that
  \begin{align*}
    F_t &= \sum_{n=1}^{N_{t+1}}f(n) - \sum_{n=1}^{N_t}f(n) \geq aN_{t+1} - bN_t = \parens*{a - \frac{b}{k}} N_{t+1},
  \end{align*}
  as long as $t$ is large. (In fact, if we choose $k$ large enough we
  can ensure that the parenthetical quantity exceeds, say, $a/2$.) On
  the other hand, we have $F_t \leq b N_{t+1}$ for all sufficiently
  large $t$, immediately. From here, we can prove the ``furthermore''
  claim analogously to how it was proved in
  Lemma~\ref{lem:averageorder}.

  Now, suppose that there is an integer sequence $\set{N_t=k^t}$,
  $k>1$, and constants $0<a' <b'$ such that
  $a' N_{t+1} \leq F_t \leq b'N_t$ for all sufficiently large $t$. Let
  $N_t \leq N < N_{t+1}$. On one hand we have
  \begin{equation*}
    \sum_{n=1}^Nf(n) \geq F_{t-1} \gg N_t \gg N
  \end{equation*}
for large $t$. On the other we have 
\begin{equation*}
  \sum_{n=1}^N f(n) \leq F_0 + F_1 + \dots + F_t \leq b' (1 + k + \dots + k^t) \ll k^{t+1} \ll N
\end{equation*}
for large $t$.
\end{proof}

Next, we establish a basic arithmetic fact. 

\begin{lemma}\label{lem:elementary}
  For any $n\in\NN$ and integer $m\geq 0$ we have
  \begin{equation*}
    1 - \frac{m}{n} \leq \parens*{1 - \frac{1}{n}}^m.
  \end{equation*}
\end{lemma}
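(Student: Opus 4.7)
The inequality is a textbook consequence of Bernoulli's inequality, so the plan is essentially to reduce to that. I would first dispose of the trivial range $m>n$: there the left-hand side $1-m/n$ is negative while the right-hand side $(1-1/n)^m$ is non-negative (since $n\geq 1$ forces $1-1/n\geq 0$), so the inequality holds without any work. The remaining case is $0\leq m\leq n$, where both sides lie in $[0,1]$.

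For the nontrivial range, the cleanest route is to invoke Bernoulli's inequality $(1+x)^m\geq 1+mx$, valid for $x\geq -1$ and integer $m\geq 0$, with $x=-1/n$. This immediately yields
\begin{equation*}
\left(1-\tfrac{1}{n}\right)^m \geq 1 - \tfrac{m}{n},
\end{equation*}
which is the desired bound. If one prefers a self-contained argument without citing Bernoulli, I would instead run induction on $m$: the base case $m=0$ is the tautology $1\leq 1$, and for the inductive step assume $(1-1/n)^m\geq 1-m/n$ and multiply both sides by $1-1/n\geq 0$ to obtain
\begin{equation*}
\left(1-\tfrac{1}{n}\right)^{m+1} \geq \left(1-\tfrac{m}{n}\right)\left(1-\tfrac{1}{n}\right) = 1 - \tfrac{m+1}{n} + \tfrac{m}{n^2} \geq 1-\tfrac{m+1}{n}.
\end{equation*}
(In the step where we multiply, we use $1-m/n\geq 0$, which is the case we have already reduced to; if this fails, we are in the trivial range handled above.)

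There is no real obstacle here — the only thing to watch is the sign of $1-m/n$ when carrying out the induction, which is why I would separate out the $m>n$ case at the very start. The lemma is entirely elementary and is invoked in the next section purely as a convenient bound, presumably to estimate a probability of the form $(1-1/n)^m$ that appears in the urn/coupon-collector style calculations for the bounded-average-order case.
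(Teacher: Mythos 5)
Your proof is correct, but it takes a genuinely different route from the paper. You reduce to Bernoulli's inequality $(1+x)^m \geq 1+mx$ with $x=-1/n$ (equivalently, induction on the integer $m$), after splitting off the trivial range $m>n$ where the left side is negative. The paper instead works with the real-variable function $g(x) = \parens*{1-\frac{1}{n}}^x - \parens*{1-\frac{x}{n}}$ for $x\geq 1$: it checks $g(1)= 0$, shows $g''(x)\geq 0$, and verifies $g'(1)\geq 0$ by introducing the auxiliary function $h(y) = y\log y + (1-y)$ and noting $h\geq 0$ on $(0,1]$, so that $g$ is nondecreasing and hence nonnegative on $[1,\infty)$ (with the cases $m\in\{0,1\}$ and $n=1$ handled directly). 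Your argument is shorter and entirely elementary, needing no calculus and no case analysis beyond the sign of $1-m/n$; the paper's argument, at the cost of differentiation, actually establishes the inequality for all real exponents $x\geq 1$, though only integer $m$ is ever used. One small remark: the precaution you flag in the inductive step is not actually needed --- multiplying both sides of the inductive hypothesis by $1-\frac{1}{n}\geq 0$ is valid regardless of the sign of $1-\frac{m}{n}$, so the induction goes through for all $m\geq 0$ without the preliminary split (which is nonetheless a harmless and perfectly clear way to organize the proof).
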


\begin{proof}
  First, note that the claim is true whenever $m=0$ or $m=1$. It is
  also true whenever $n=1$.

  Now, let $n\geq 2$. Consider the function
  \begin{equation*}
    g(x) = \parens*{1-\frac{1}{n}}^x - \parens*{1 - \frac{x}{n}}.
  \end{equation*}
  We will show that $g(x)\geq 0$ for all $x\geq 1$. Since we already
  know that $g(1)\geq 0$, it is enough to show that $g'(x)\geq 0$ for
  all $x\geq 1$. Note that
  \begin{equation*}
    g'(x) = \parens*{1-\frac{1}{n}}^x\log\parens*{1-\frac{1}{n}}  + \frac{1}{n},
  \end{equation*}
and that 
  \begin{equation*}
    g''(x) = \parens*{1-\frac{1}{n}}^x\brackets*{\log\parens*{1-\frac{1}{n}}}^2.
  \end{equation*}
  Clearly, for any $x\geq 1$, we have $g''(x)\geq 0$. So
  we only have to show that $g'(1) \geq 0$.

  To see this, consider the function $h(y) = y\log y + (1-y)$ and note
  that $h(1) = 0$. Notice also that $h'(y) = \log y$ and is therefore
  nonpositive whenever $y\in(0,1]$. Therefore, $h(y)\geq 0$ for all
  $y\in (0,1]$. In particular, $g'(1)\geq 0$. 
\end{proof}

The following proposition is the version of
Proposition~\ref{prop:randomlocalubiquity} for the case where $f(n)$
is positive on average and bounded on average.

\begin{proposition}[Random local ubiquity]\label{prop:expectedubiquitybddsubseq}
  Suppose $0\leq f(n)\leq n$ is a sequence whose average order is
  positive and bounded, and let $\set{N_t}$ be as in
  Lemma~\ref{lem:boundedonaverage}. Then there exist $\kappa>0$ such
  that for almost every $P\in \Omega_f$, the following holds: for
  every non-trivial interval $I\subset [0,1)$ we have
    \begin{equation*}
      \lambda\parens*{I\cap \bigcup_{N_t < n \leq N_{t+1}} \bigcup_{a\in P_n}B\parens*{\frac{a}{n}, \frac{1}{F_t}}} \geq \kappa \lambda(I)
  \end{equation*}
  whenever $t\geq t_0(I)$.  That is, $(\QQ(\calP), J(P))$ is
  almost surely a locally ubiquitous system relative to
  $\parens*{\rho, \set{N_t}}$, where $\rho(n) = F_t^{-1}$
  whenever $n\in (N_t, N_{t+1}]$.
\end{proposition}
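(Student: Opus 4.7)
The plan is to run a first-and-second-moment argument directly on the Lebesgue measure of the random covered set. The counting-plus-disjointness strategy used in the proof of Proposition~\ref{prop:randomlocalubiquity} is unavailable here, because with $F_t \asymp N_{t+1}$ (rather than $F_t \gg N_{t+1}^2$) the balls $B(a/n, 1/F_t)$ around nearby random fractions can overlap, so one cannot convert a count of fractions directly into a measure bound. Fix $I \subset [0,1)$ and set
\begin{equation*}
  C_t(I) := I \cap \bigcup_{N_t < n \leq N_{t+1}} \bigcup_{a \in P_n} B\parens*{\frac{a}{n}, \frac{1}{F_t}}, \qquad U_t(I) := \lambda(C_t(I)),
\end{equation*}
with $\set{N_t = k^t}$ taken from Lemma~\ref{lem:boundedonaverage}. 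The goal is to show $U_t(I) \geq \kappa \lambda(I)$ almost surely for all large $t$, with $\kappa > 0$ independent of $I$.

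For the first moment, set $M_n(x) = \#\set{a \in [n] : \Abs{x - a/n} < 1/F_t}$ and $Y(x) = \sum_{n \in (N_t, N_{t+1}]} f(n) M_n(x)/n$. The independence of the $P_n$ gives
\begin{equation*}
  \PP_f(x \notin C_t(I)) = \prod_n \frac{\binom{n - M_n(x)}{f(n)}}{\binom{n}{f(n)}} \leq \prod_n \parens*{1 - \frac{f(n)}{n}}^{M_n(x)} \leq e^{-Y(x)},
\end{equation*}
using $\binom{n-M}{f}/\binom{n}{f} = \prod_{j=0}^{M-1}(1 - f/(n-j))$ with $f/(n-j) \geq f/n$. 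A Fubini computation shows $\int_I Y(x)\,dx = 2\lambda(I) + O(1/F_t)$, while the uniform upper bound $Y(x) \leq 2 + \sum_n f(n)/n \leq C$ holds for all $x$ and all large $t$ (once $k$ is fixed), since $\sum_{n \in (N_t, N_{t+1}]} f(n)/n \leq F_t/N_t = O(1)$ from the bounded-average hypothesis. The chord inequality $1 - e^{-y} \geq (1 - e^{-C})y/C$ on $[0, C]$ then yields $\EE_f[U_t(I)] \geq c\lambda(I)$ for some constant $c > 0$.

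For the second moment, expand
\begin{equation*}
  \sigma^2(U_t(I)) = \int_I \int_I \brackets*{\PP_f(x, y \in C_t(I)) - \PP_f(x \in C_t(I))\PP_f(y \in C_t(I))}\,dx\,dy
\end{equation*}
and split by whether $\Abs{x - y} \leq 2/F_t$ or not. The near-diagonal strip contributes at most $O(\lambda(I)/F_t)$ trivially. Off the diagonal, $A_n(x) := \set{a \in [n] : \abs{x - a/n} < 1/F_t}$ and $A_n(y)$ are disjoint subsets of $[n]$ for every $n$, and for each individual $P_n$ the events $\set{P_n \cap A_n(x) = \emptyset}$ and $\set{P_n \cap A_n(y) = \emptyset}$ are negatively correlated, because log-concavity of $M \mapsto \binom{n-M}{f(n)}$ forces $\binom{n - M_x - M_y}{f(n)}\binom{n}{f(n)} \leq \binom{n - M_x}{f(n)}\binom{n - M_y}{f(n)}$. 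Independence across $n$ upgrades this to $\PP_f(x, y \notin C_t(I)) \leq \PP_f(x \notin C_t(I))\PP_f(y \notin C_t(I))$, and a short inclusion-exclusion then forces the same direction for $\PP_f(x, y \in C_t(I))$, so the off-diagonal integrand is non-positive. Hence $\sigma^2(U_t(I)) \ll \lambda(I)/F_t$.

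Chebyshev now gives $\PP_f(U_t(I) < c\lambda(I)/2) \ll (F_t \lambda(I))^{-1}$, which is summable in $t$ by the geometric growth of $\set{F_t}$ from the ``furthermore'' part of Lemma~\ref{lem:boundedonaverage}; Borel--Cantelli, a countable union over intervals with rational endpoints, and a standard inner-approximation argument complete the proof. The main obstacle, I expect, is the variance estimate --- specifically, cleanly verifying the hypergeometric negative-correlation inequality via log-concavity of $\binom{n - M}{f(n)}$ and lifting it through the independent blocks; the expectation side reduces to Fubini plus convexity once the pointwise probability has been factored.
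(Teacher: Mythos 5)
Your proposal is correct, and it reaches the proposition by a genuinely different route than the paper. The paper discretizes: it chops $I$ into subintervals $I_\ell$ of length $N_t^{-1}$, notes that each $I_\ell$ contains a fraction $a/n$ for every $n\in(N_t,N_{t+1}]$, so the probability that a given $I_\ell$ receives no chosen fraction in the whole block is at most $\parens*{1-\frac{1}{N_{t+1}}}^{F_t}$, which is bounded away from $1$ because $F_t\asymp N_{t+1}$; it then counts hit subintervals $Z_t$, bounds $\sigma^2(Z_t)\leq \EE(Z_t)$ by a (rather informally justified) negative-correlation remark about the indicators $Y_\ell$, applies Chebyshev and Borel--Cantelli over rational intervals, and only at the end converts the count of hit $N_t^{-1}$-intervals into a measure bound for the $F_t^{-1}$-balls using $F_t\asymp N_t$. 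You instead run the second-moment method directly on $U_t(I)=\lambda(C_t(I))$: the pointwise survival probability via the hypergeometric ratio and $e^{-Y(x)}$, the expectation via Fubini plus the chord inequality (your bounds $\int_I Y = 2\lambda(I)+O(1/F_t)$ and $Y(x)\leq 2+\sum_n f(n)/n \ll 1$ are exactly where the bounded-average hypothesis $F_t\ll N_t$ enters), and the variance via the diagonal strip plus the explicit inequality $\binom{n-M_x-M_y}{f}\binom{n}{f}\leq \binom{n-M_x}{f}\binom{n-M_y}{f}$, which indeed follows since $\binom{m-1}{f}/\binom{m}{f}=1-f/m$ is increasing in $m$, and which passes from complements to the events themselves by the covariance identity. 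Both arguments rest on the same pillars --- $F_t\asymp N_{t+1}$, independence across denominators, negative correlation, Chebyshev, Borel--Cantelli over rational-endpoint intervals --- but yours delivers the measure bound directly with no final count-to-measure conversion and makes the negative-correlation step fully rigorous, at the cost of a little more computation; the paper's occupancy argument avoids all integrals and is more elementary in that respect. One cosmetic remark: the summability of $\PP_f\parens*{U_t(I)<c\lambda(I)/2}\ll (F_t\lambda(I))^{-1}$ already follows from $F_t\gg N_{t+1}=k^{t+1}$ in Lemma~\ref{lem:boundedonaverage}; the ``furthermore'' clause is not needed here (it is used later, in the proof of Theorem~\ref{thm:boundedandlinear}, to compare $F_t$ with $F_{t-1}$).
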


\begin{proof}
  Let $I\subset [0,1]$ be a nontrivial subinterval. Let $t_0:=t_0(I)$
  be such that for any $t\geq t_0$, there exist $\ell_1<\ell_2 \in [N_t]$ such that we have
  \begin{equation*}
    I \supset I' := \bigcup_{\ell=\ell_1+1}^{\ell_2} \left[\frac{\ell}{N_t}, \frac{\ell+1}{N_t}\right) = \bigcup_{\ell=\ell_1+1}^{\ell_2}I_\ell
  \end{equation*}
  where $\lambda(I') \geq \frac{\lambda(I)}{2}$. Notice that
  $\lambda(I') = (\ell_2-\ell_1)/N_t$. Notice also that $t_0$ really only
  depends on the length of $I$, and not on $I$ itself.

  For each $N_t < n \leq N_{t+1}$, we make an independent uniform random
  choice $\calP_n\in \Omega_{f,n}$. Define the random variable
  $X_{n,\ell} = \#(\QQ(P_n) \cap I_\ell)$. For any $\ell\in[N_t]$, and
  any integer $n \in (N_t, N_{t+1}]$, we have
  \begin{equation*}
     \#\parens*{\frac{[n]}{n} \cap I_\ell} \geq 1
  \end{equation*}
  so
  \begin{equation*}
    \PP_f(X_{n,\ell} = 0) \leq 1-\frac{f(n)}{n} \overset{\textrm{Lem.~\ref{lem:elementary}}}{\leq} \parens*{1 - \frac{1}{n}}^{f(n)} \leq \parens*{1 - \frac{1}{N_{t+1}}}^{f(n)}.
  \end{equation*}
  Since choices are made independently,
  \begin{equation*}
    \PP_f(X_{n,\ell}=0, \, N_t < n\leq N_{t+1}) \leq \parens*{1-\frac{1}{N_{t+1}}}^{F_t}.
  \end{equation*}
  Let
  \begin{equation*}
    Y_\ell = \min\set*{1, \sum_{n\in(N_t, N_{t+1}]} X_{n,\ell}},
  \end{equation*}
  and let $Z_t = \sum_{\ell=\ell_1+1}^{\ell_2} Y_\ell$. That is,
  $Y_\ell$ is $1$ or $0$ according as $I_\ell$ witnesses the choice of
  some $a\in P_n$, with $n\in (N_t, N_{t+1}]$. And $Z_t$ is the number
  of intervals $I_{\ell_1+1}, \dots, I_{\ell_2}$ containing points
  $a\in P_n$, with $n\in (N_t, N_{t+1}]$.

  Since $Y_\ell$ only takes values $0$ or $1$, we have
  \begin{equation*}
    \EE(Y_\ell) = \PP_f(Y_\ell = 1) \geq 1 - \parens*{1-\frac{1}{N_{t+1}}}^{F_t}.
  \end{equation*}
  Therefore, 
  \begin{equation*}
    \EE(Z_t) = \sum_{\ell=\ell_1+1}^{\ell_2} \EE(Y_\ell) \geq (\ell_2-\ell_1)\brackets*{1-\parens*{1-\frac{1}{N_{t+1}}}^{F_t}}.
  \end{equation*}
  Since $N_{t+1} \asymp F_t$, the bracketed factor is universally
  bounded below. That is, we have $\EE(Z_t) \geq C_{f} (\ell_2-\ell_1)$, where
  $C_{f}>0$ is a constant only depending on $f$ (or, more
  appropriately, the sequence $\set{N_t}$).

  As for the variance,
  \begin{align*}
    \sigma^2(Z_t) &= \EE(Z_t^2) - \EE(Z_t)^2 \\
                  &= \sum_{k, \ell=\ell_1+1}^{\ell_2} \EE(Y_kY_\ell) - \parens*{\sum_{\ell=\ell_1+1}^{\ell_2} \EE(Y_\ell)}^2\\
                  &= \sum_{k\neq \ell }\brackets*{\EE(Y_kY_\ell) - \EE(Y_k)\EE(Y_\ell)} + \sum_{\ell = a+1}^b \brackets*{\EE(Y_\ell^2)  - \EE(Y_\ell)^2}.
  \end{align*}
  Notice that $Y_k, Y_\ell, Y_kY_\ell$ are random variables taking only the
  values $0$ and $1$, hence, their expected values are exactly the
  probabilities that they take the value $1$. In particular,
  \begin{align*}
    \EE(Y_kY_\ell) - \EE(Y_k)\EE(Y_\ell) &= \PP_f(Y_k=1, Y_\ell=1) - \PP_f(Y_k=1)\PP_f(Y_\ell=1) \\
                                         &= \brackets*{\PP_f(Y_k=1\mid Y_\ell=1) - \PP_f(Y_k=1)}\PP_f(Y_\ell=1).
  \end{align*}
  Note that the bracketed factor is non-positive: knowing that the
  interval $I_\ell$ is chosen at some point of our process does not
  increase the probability that $I_k$ is chosen. Therefore, we may
  conclude that 
  \begin{align*}
    \sigma^2 (Z_t) &\leq \sum_{\ell = \ell_1+1}^{\ell_2} \brackets*{\EE(Y_\ell^2)  - \EE(Y_\ell)^2} \\
                   &\leq \sum_{\ell = \ell_1+1}^{\ell_2} \EE(Y_\ell)\\
                   &= \EE(Z_t),
  \end{align*}
  for all $t\geq t_0(I)$.

  We now argue as in the proof of
  Proposition~\ref{prop:fareycoupons}. Chebyshev's inequality implies that
  that
  \begin{equation*}
    \PP_f\parens*{Z_t < \frac{1}{2}\EE(Z_t)} \leq 4\frac{\sigma^2(Z_t)}{\EE(Z_t)^2} \leq \frac{4}{\EE(Z_t)} \ll \frac{1}{\ell_2 - \ell_1} \ll \frac{1}{N_t}.
  \end{equation*}
  And since $\sum_t N_t^{-1}$ converges as $t\to\infty$, the
  Borel--Cantelli Lemma implies that the probability is zero that
  $Z_t < \frac{1}{2}\EE(Z_t)$ for infinitely many $t$. In other words,
  there is a full-measure subset $U_I\subset\Omega_f$ such that for
  every $P\in U_I$, we have
  \begin{equation*}
    Z_t \geq \frac{1}{2}\EE(Z_t) \geq \frac{C_f}{2}(\ell_2 - \ell_1) = \frac{C_f}{2}N_t\lambda(I')\geq\frac{C_f}{4}N_t\lambda(I)
  \end{equation*}
  for all sufficiently large $t$. Intersecting over all intervals
  $I\subset [0,1]$ with rational endpoints, we conclude that there
  exists some constant $\kappa_f>0$ and full-measure subset
  $U\subset \Omega_f$ such that for every $P\in U$, the following
  holds: for every interval $I\subset[0,1]$, there is some
  $t_0:=t_0(I,P)$ such that
  $Z_t(I) \geq \kappa_f N_t\lambda(I)$ for all
  $t\geq t_0$.

  Finally, let us consider the measure of
  \begin{equation*}
    I \cap \bigcup_{N_t<n\leq N_{t+1}}\bigcup_{a\in\calP_n}B\parens*{\frac{a}{n}, \frac{1}{F_t}}.
  \end{equation*}
  We have $F_t \ll N_{t+1} \ll N_t$, and therefore the measure of the
  above set is bounded below by
  \begin{equation*}
    \kappa_f''\lambda\parens*{I \cap \bigcup_{N_t<n\leq N_{t+1}}\bigcup_{a\in\calP_n}B\parens*{\frac{a}{n}, \frac{1}{N_t}}}
  \end{equation*}
  where $\kappa_f''>0$ is a constant only depending on $f$. The
  observations in the previous paragraph in turn give us the bound
  \begin{equation*}
    \lambda\parens*{I \cap \bigcup_{N_t<n\leq N_{t+1}}\bigcup_{a\in\calP_n}B\parens*{\frac{a}{n}, \frac{1}{N_t}}} \geq \kappa_f' \lambda(I).
  \end{equation*}
  whenever $t\geq t_0(I,P)$. This proves the proposition, with
  $\kappa_f = \kappa_f'\kappa_f''$.
\end{proof}


\section{Proof of Theorem~\ref{thm:boundedandlinear}}
\label{sec:general}

In \S\ref{sec:phiubiq} and \S\ref{sec:bounded} we showed that one
almost surely obtains a locally ubiquitous system by choosing rational
numbers according to the hypotheses of
Theorem~\ref{thm:boundedandlinear}. We are now prepared to state a
proof of that theorem.

\begin{proof}[Proof of Theorem~\ref{thm:boundedandlinear}]
  Let $\set{N_t}$ be as in Lemma~\ref{lem:averageorder}
  or~\ref{lem:boundedonaverage}, whichever of the two is relevant. The
  function $\rho(n)$ defined by
  \begin{equation*}
    \rho(n) = \frac{1}{F_t} \quad\textrm{whenever}\quad n\in(N_t, N_{t+1}]
  \end{equation*}
  is $\set{N_t}$-regular.

  Proposition~\ref{prop:randomlocalubiquity} (or
  Proposition~\ref{prop:expectedubiquitybddsubseq}) ensures that there
  is a full-measure set $U\subset \Omega_f$ such that for every
  $\calP\in U$, the pair $(\QQ(P), J(P))$ is a locally ubiquitous
  system relative to $\rho(n)$ and $\set{N_t}$. We are in a position
  to apply Theorem~\ref{thm:ubiquity}.

  Let $\Psi:\NN\to\RR^+$ be decreasing. Then
  \begin{align*}
    \sum_{n\in(N_t, N_{t+1}]} f(n)\Psi(n) &\leq \Psi(N_t)\sum_{n\in(N_t, N_{t+1}]}f(n)  \\
                                          &= F_t \Psi(N_t) \ll F_{t-1}\Psi(N_t) \\
                                          &= \frac{\Psi(N_t)}{\rho(N_t)}
  \end{align*}
  for large $t$. (The comparison $F_t \ll F_{t-1}$ comes directly from
  Lemma~\ref{lem:averageorder} or~\ref{lem:boundedonaverage}.)
  Therefore, if $\sum_n f(n)\Psi(n)$ diverges, then the sum
  $\sum_t \frac{\Psi(N_t)}{\rho(N_t)}$ also diverges. By
  Theorem~\ref{thm:ubiquity}, we have $\lambda(W^\calP(\Psi))=1$ as
  desired.

  On the other hand, if $\Psi:\NN\to\RR_{\geq 0}$ is such that
  $\sum_n f(n)\Psi(n)$ converges, then it is a consequence of the
  Borel--Cantelli Lemma that $\lambda(W^\calP(\Psi))=0$ for every
  $P\in\Omega_f$, regardless of monotonicity of $\Psi$.
\end{proof}

 
\section{Proof of Theorem~\ref{thm:counterex}}
\label{sec:monotonicity}

Clearly, if $f(n) \gg n$, the Duffin--Schaeffer Counterexample will
serve its purpose for Theorem~\ref{thm:boundedandlinear}. In this
section we investigate how much slower $f(n)$ can grow. In a sense,
finding counterexamples for all $f(n) \gg n/\log\log n$ is already out
of the question, because $\varphi(n) \gg n/\log\log n$ and we do not
expect any counterexamples in this case. We also should not expect to
find a counterexample every time there is a sequence of $n$'s on which
$f(n) \ll n/\log\log n$ fails, because this is true for $\varphi$, on
the sequence of primes. We can, however, prove:
\begin{itemize}
\item Theorem~\ref{thm:counterex}, which provides counterexamples
  whenever $f(n) = n/o(\log\log n)$, and
\item Corollary~\ref{cor:sweetspot}, which shows that there exist
  functions such that $f(n) \ll n/\log\log n$ on a sequence of $n$'s,
  for which there are counterexamples.
\end{itemize}
These show that $\varphi$ is in a sort of a ``sweet spot'' for this
problem.

Before stating the proof of Theorem~\ref{thm:counterex}, we define
\begin{equation*}
  \calA_n(\Psi) := \set*{\real\in [0,1] : \Abs{\real - \frac{a}{n}}<\Psi(n) \textrm{ for some } a\in [n]},
\end{equation*}
so that $W(\Psi) = \limsup_{n\to\infty} \calA_n$.

\begin{proof}[Proof of Theorem~\ref{thm:counterex}]
  Since
  \begin{equation*}
    \frac{f(n)\log\log n}{n} \longrightarrow \infty,
  \end{equation*}
  we can find some non-increasing sequence $\tau(n)\to 0$ such that
  \begin{equation}\label{eq:fbound}
    f(n) \geq \frac{n}{\tau(n)\log\log n}
  \end{equation}
  for all $n$ sufficiently large. Now, let $\sum_j c_j$ be a
  convergent series of positive real numbers. Since
  $\tau(n)\downarrow 0$, we can find a strictly increasing sequence
  $\set{M_j}_{j\geq 0}$ of integers with $M_0=0$, such that
  \begin{equation}
    \label{eq:Mj}
    \frac{c_j}{\tau((M_j-1)!)} \gg 1  \quad\textrm{holds for infinitely many}\quad j\geq 1.
  \end{equation}
  Let us further stipulate that
  \begin{equation}
    \textrm{any two consecutive terms in the
      sequence $\set{M_j}$ differ by at least $2$.}\label{eq:diff2}
  \end{equation}
  Now set $K_j = M_j!$ for $j\geq 1$, and let
  \begin{equation*}
    k_i^{(j)} =\frac{K_j}{i}
  \end{equation*}
  for $i=1, \dots, M_j$.  Notice that
  \begin{equation}
    \label{eq:k<k}
    M_j! = k_1^{(j)} > k_2^{(j)} > \dots > k_{M_j}^{(j)} =(M_j-1)!
  \end{equation}
  so that the $k_i^{(j)}$ are pairwise distinct. (The case
  $(M_j-1)! = M_{j-1}!$ is ruled out by stipulation~(\ref{eq:diff2}).)
  Define
  \begin{equation}\label{eq:defpsi}
    \Psi\parens{k_i^{(j)}} = \frac{c_j}{K_j},
  \end{equation}
  and $\Psi(k)=0$ for all $k\notin\set{k_i^{(j)}}$. Then we have
  $\calA_{k_i^{(j)}}(\Psi) \subset \calA_{k_1^{(j)}}(\Psi)$ for all
  $j\geq 1$ and $i=1, \dots, M_j$. In particular,
  \begin{equation*}
    \calW(\Psi) =  \limsup_{n\to\infty}\calA_n(\Psi) \subset \limsup_{j\to\infty} \calA_{k_1^{(j)}}(\Psi).
  \end{equation*}
  But
  $\sum_j \lambda\parens{\calA_{k_1^{(j)}}(\Psi)} \leq \sum_j 2c_j <
  \infty$,
  so the Borel--Cantelli Lemma implies that $\lambda\parens*{\calW(\Psi)} = 0$.

  It is only left to show that
  \begin{equation*}
    \sum_{n\geq 1}f(n)\Psi(n) = \infty.
  \end{equation*}
  For some large $j_0$ we have
  \begin{align}
    \sum_{n\geq 1}f(n)\Psi(n) &\overset{(\ref{eq:defpsi})}{=} \sum_{j\geq 1}\sum_{i=1}^{M_j}\frac{f(k_i^{(j)})c_j}{K_j} \nonumber \\
                                        &\overset{(\ref{eq:fbound})}{\geq} \sum_{j\geq j_0}\sum_{i=1}^{M_j}\frac{c_j\,k_i^{(j)}}{\tau(k_i^{(j)}) K_j \log\log k_i^{(j)}} \nonumber \\
                                        &\overset{(\ref{eq:k<k})}{\geq} \sum_{j\geq j_0}\frac{c_j}{\tau(k_{M_j}^{(j)})\log\log k_1^{(j)}}\sum_{i=1}^{M_j}\frac{1}{i} \nonumber \\
                                        &\geq \sum_{j\geq j_0}\frac{c_j\log M_j}{\tau((M_j-1)!) \log\log (M_j!)}.\label{eq:mustdiv}
  \end{align}
  Notice that
  \begin{equation*}
    \frac{c_j\log M_j}{\tau((M_j-1)!) \log\log (M_j!)} \sim  \frac{c_j}{\tau((M_j-1)!)} \quad (j\to\infty).
  \end{equation*}
  Therefore, by~(\ref{eq:Mj}), the sum~(\ref{eq:mustdiv}) must
  diverge, and this proves the theorem.
\end{proof}

\begin{remark}
  To see that this could not possibly work for $f=\varphi$, we compute
  \begin{equation*}
    \sum_{n\geq 1}\varphi(n)\Psi(n) = \sum_{j\geq
      1}\sum_{i=1}^{M_j}\frac{\varphi(k_i^{(j)})c_j}{K_j}
    \leq \sum_{j\geq 1}\frac{c_j}{K_j}\sum_{k\mid K_j}\varphi(k) = \sum_{j\geq 1}c_j,  
  \end{equation*}
  which converges.
\end{remark}

\begin{proof}[Proof of Corollary~\ref{cor:sweetspot}]
  We re-create the previous proof, starting with a convergent series
  $\sum_j c_j$ of positive real numbers. Let $\tau(n)$ be a decreasing
  function that converges to $0$, and such that
  \begin{equation*}
    \tau(n) \geq \frac{1}{\log\log n}    
  \end{equation*}
  for all sufficiently large $n$. We find a strictly increasing
  sequence $\set{M_j}_{j\geq 0}$ of integers with $M_0=0$, such that
  \begin{equation}
    \label{eq:Mjtwo}
    \frac{c_j}{\tau((M_j-1)!)} \gg 1  \quad\textrm{holds for infinitely many}\quad j\geq 1.
  \end{equation}
  We further stipulate that
  \begin{equation}
    \textrm{any two consecutive terms in the
      sequence $\set{M_j}$ differ by at least $2$.}\label{eq:diff2two}
  \end{equation}
  We define $K_j, k_i^{(j)}, \Psi$ as before. Again, for the same
  reasons as before, we have $\abs{\calW(\psi)} = 0$.

  Now suppose $f$ is such that
  \begin{equation*}
    \begin{dcases}
      f(n) \geq \frac{n}{\tau(n)\log\log n} &\textrm{if $n=k_i^{(j)}$ for some $i,j$, and}\\
      f(n) \ll \frac{n}{\log\log n} &\textrm{otherwise}.
    \end{dcases}
  \end{equation*}
  The previous proof will now show that
  $\sum_{n\geq 1}f(n)\Psi(n)$ diverges.
\end{proof}

\section{Proof that Conjecture~\ref{conj:randomdsc} implies
  Catlin's Conjecture}
\label{sec:wdsc=catlin}

The purpose of this section is to show 
\begin{equation}\label{eq:ifrandomthencatlin}
  \textrm{Conjecture~\ref{conj:randomdsc}} \quad\implies\quad \textrm{Catlin's Conjecture.}
\end{equation}
Consider the following weak version of the Duffin--Schaeffer
Conjecture.
\begin{conjecture}[Weak Duffin--Schaeffer Conjecture]\label{conj:weakdsc}
  For any $\Psi : \NN\to\RR_{\geq 0}$ such that
  $\sum_{n=1}^\infty \varphi(n)\Psi(n) =\infty$, we have
  $\lambda\parens*{\calW (\Psi)} = 1$.
\end{conjecture}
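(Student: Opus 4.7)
The strategy is to reduce the Weak Duffin--Schaeffer Conjecture to the divergence half of the full Duffin--Schaeffer Conjecture itself, applied to a transformed approximating function. By Gallagher's zero--one law we have $\lambda(\calW(\Psi)) \in \set{0,1}$, so it suffices to rule out the null case under the hypothesis $\sum_{n=1}^\infty \varphi(n)\Psi(n) = \infty$.

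First, I would perform the ``reduce the fraction'' decomposition. Every admissible pair $(a, n)$ with $a \in [n]$ corresponds uniquely to a triple $(a', d, k)$, where $a/n = a'/d$ is in lowest terms (so $d = n/\gcd(a,n)$ and $a' \in [d]'$) and $k = n/d$, and the inequality $\abs{x - a/n} < \Psi(n)$ reads $\abs{x - a'/d} < \Psi(kd)$. Grouping the witnesses of $x \in \calW(\Psi)$ according to their reduced form shows that any irrational $x \in \calW(\Psi)$ is necessarily approximated by reduced fractions with infinitely many distinct reduced denominators $d$; therefore, up to a $\lambda$-null set,
\[
  \calW(\Psi) \;=\; \calW'(\Psi^\star), \qquad \Psi^\star(d) := \sup_{k \in \NN} \Psi(kd),
\]
where $\Psi^\star$ is precisely the function that appears in Catlin's Conjecture.

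Second, I would observe that the divergence hypothesis passes cleanly to $\Psi^\star$: since $\Psi^\star(d) \geq \Psi(d)$ for every $d$, the assumption $\sum_n \varphi(n)\Psi(n) = \infty$ immediately yields $\sum_d \varphi(d)\Psi^\star(d) = \infty$, which is exactly the divergence hypothesis of the Duffin--Schaeffer Conjecture applied to $\Psi^\star$. Any proof of the divergence half of that conjecture (equivalently, of the divergence half of Catlin's Conjecture) would then give $\lambda(\calW'(\Psi^\star)) = 1$, and hence $\lambda(\calW(\Psi)) = 1$, closing the argument.

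The main obstacle is that the Duffin--Schaeffer Conjecture remains open in its full generality, so the third step cannot be executed unconditionally. A proof of the Weak Duffin--Schaeffer Conjecture that bypasses the full DSC would need to exploit the extra freedom of permitting non-reduced approximating fractions in $\calW(\Psi)$: for each denominator $d$ one would bundle together the whole family of balls $B(a'/d, \Psi(kd))$, $k \in \NN$, in the hope that these enlarged targets admit stronger quasi-independence estimates than the individual reduced fractions would provide. The heart of the classical Pollington--Vaughan approach to DSC is the control of the pairwise intersection measures $\lambda(\calA_m(\Psi) \cap \calA_n(\Psi))$ for incommensurate $m, n$, and nothing in the Weak DSC hypothesis obviously weakens this requirement; that is why one expects the Weak DSC to share essentially the same difficulty, and quite possibly the same resolution, as the full conjecture.
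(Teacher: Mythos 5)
You should start from the fact that the statement you were handed is not a theorem of the paper at all: the Weak Duffin--Schaeffer Conjecture is introduced as an open conjecture, and the paper's only content about it is conditional, namely that Conjecture~\ref{conj:randomdsc} implies it and that it is \emph{equivalent} to Catlin's Conjecture (the equivalence~(\ref{eq:weakiffcatlin}), proved via Lemma~\ref{lem:overlinePsi}). Your proposal is likewise conditional, and you say so honestly, so the fair comparison is between the two reductions. The paper conditions on Catlin's Conjecture and needs only the identity $W(\Psi)=W(\overline\Psi)$ of Lemma~\ref{lem:overlinePsi}, where $\overline\Psi$ is exactly your $\Psi^\star$: since $\overline\Psi\geq\Psi$, divergence of $\sum\varphi(n)\Psi(n)$ forces divergence of $\sum\varphi(n)\overline\Psi(n)$, and Catlin applied to $\Psi$ gives $\lambda(W(\Psi))=1$ immediately. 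You instead condition on the divergence half of the Duffin--Schaeffer Conjecture applied to $\Psi^\star$, through the stronger claim that $W(\Psi)$ and $W'(\Psi^\star)$ coincide up to null sets; this is the mechanism behind the implication (noted in the paper) that the Duffin--Schaeffer Conjecture implies Catlin's Conjecture, so the route is coherent, but within the paper both hypotheses are equally open, and unlike~(\ref{eq:weakiffcatlin}) your route does not recover the equivalence with Catlin's Conjecture, only a one-way reduction.

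Two points in your claimed identity need repair. First, the containment your argument actually uses is $W'(\Psi^\star)\subseteq W(\Psi)$ (so that $\lambda(W'(\Psi^\star))=1$ transfers), and to convert a reduced witness $a'/d$ into a witness for $W(\Psi)$ you must replace $\Psi^\star(d)=\sup_{k}\Psi(kd)$ by an attained value $\Psi(k_d d)$; the supremum need not be attained, and the paper's Lemma~\ref{lem:overlinePsi} begins precisely by splitting off the case in which $\Psi(kd)\not\to 0$ in $k$ for some $d$ (there $W(\Psi)=[0,1]$ trivially), after which the maximum exists. Second, your justification of the other containment---that every irrational point of $W(\Psi)$ has witnesses with infinitely many distinct reduced denominators---fails without that same reduction: if $\Psi(kd)\not\to 0$ for a fixed $d$, a single reduced fraction $a'/d$ can absorb infinitely many witnesses $(ka',kd)$, and to see that $W'(\Psi^\star)$ is still of full measure in this degenerate case one needs an additional equidistribution input about reduced fractions (in the spirit of Lemma~\ref{lem:niederreiter}), which the paper's non-reduced formulation never requires. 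Both issues are fixable, but they are exactly where staying with $W(\overline\Psi)$, as the paper does, keeps the argument clean; also, the appeal to a zero--one law is unnecessary, since the divergence case is what must be proved and the convergence case is plain Borel--Cantelli.
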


\begin{remark}
  It is easy to see
  \begin{equation}\label{eq:randomimpliesweak}
    \textrm{Conjecture~\ref{conj:randomdsc}} \quad\implies\quad \textrm{Conjecture~\ref{conj:weakdsc}.}
  \end{equation}
  In fact, even if a single point in $\Omega_\varphi$
  satisfies~(\ref{eq:rkt}) without the monotonicity assumption, then
  Conjecture~\ref{conj:weakdsc} follows. In particular, the
  Duffin--Schaeffer Conjecture implies
  Conjecture~\ref{conj:weakdsc}. But even weaker statements would also
  imply Conjecture~\ref{conj:weakdsc}. For instance, the following
  would be enough: For each $\Psi:\NN\to\RR_{\geq 0}$ such that
  $\sum\varphi(n)\Psi(n)=\infty$, there exists a point
  $P\in \Omega_\varphi$ (which may depend on $\Psi$) with
  $\lambda(W^P(\Psi))=1$.
\end{remark}

We will show
\begin{equation}\label{eq:weakiffcatlin}
  \textrm{Conjecture~\ref{conj:weakdsc}} \quad\iff\quad \textrm{Catlin's Conjecture}
\end{equation}
which, together with~(\ref{eq:randomimpliesweak}),
establishes~(\ref{eq:ifrandomthencatlin}).\footnote{The
  equivalence~(\ref{eq:weakiffcatlin}) answers a footnote
  of~\cite[Page 634]{ds_counterex}, where it was suggested that
  Conjecture~\ref{conj:weakdsc} and its relationship to the
  Duffin--Schaeffer Conjecture were unexplored questions. In fact,
  those questions \emph{have} been explored, in~\cite{CatlinConj},
  where Catlin introduced his conjecture! But more exploration is
  required.}

\begin{lemma}\label{lem:overlinePsi}
  Given a function $\Psi : \NN\to\RR_{\geq 0}$, define the new function
  \begin{equation*}
    \overline\Psi (n) := \max_{k\in\NN}\set*{\Psi(kn)}. 
  \end{equation*}
  Then $W(\Psi) = W(\overline\Psi)$.
\end{lemma}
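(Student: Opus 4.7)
The containment $W(\Psi) \subseteq W(\overline{\Psi})$ is immediate since $\Psi(n) \leq \overline{\Psi}(n)$ for every $n$ (take $k=1$ in the max defining $\overline{\Psi}$). So the whole game is the reverse containment, which amounts to showing that whenever a number is well-approximable by $a/n$ at scale $\overline{\Psi}(n) = \Psi(k_n n)$, we can ``upgrade'' the approximation by clearing denominators.

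The plan is as follows. Suppose $x \in W(\overline{\Psi})$, so there are infinitely many pairs $(a,n)$ with $a \in [n]$ and $\abs{x - a/n} < \overline{\Psi}(n)$. For each such $n$, pick a positive integer $k_n$ realizing the maximum, so that $\overline{\Psi}(n) = \Psi(k_n n)$. Set $m := k_n n$ and $b := k_n a$. Then $a/n = b/m$, so
\begin{equation*}
  \Abs{x - \frac{b}{m}} = \Abs{x - \frac{a}{n}} < \overline{\Psi}(n) = \Psi(m),
\end{equation*}
and the bounds $1 \leq a \leq n$ give $1 \leq b \leq m$, i.e.\ $b \in [m]$. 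So each pair $(a,n)$ certifying membership in $W(\overline{\Psi})$ produces a pair $(b,m)$ certifying an approximation at scale $\Psi(m)$.

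The only remaining thing to check is that infinitely many distinct values of $m$ arise as we vary over the infinitely many witnessing pairs $(a,n)$. Here the plan is a direct counting argument: if only finitely many values $m \in \{M_1, \dots, M_r\}$ appeared, then every witnessing $n$ would be a divisor of some $M_i$, leaving only finitely many possible values of $n$; but then some single $n$ would have to appear infinitely often, forcing infinitely many distinct $a \in [n]$, which is absurd. Hence infinitely many distinct $m$'s arise, so $x \in W(\Psi)$, completing the reverse containment.

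The argument is essentially bookkeeping; the one step that requires a moment's care is the assertion that $\overline{\Psi}(n)$ is actually attained, so that a specific $k_n$ exists. If one prefers to work with a supremum rather than a maximum, the same argument goes through by replacing $k_n$ with any $k_n$ satisfying $\Psi(k_n n) > \overline{\Psi}(n) - \epsilon_n$ for some auxiliary $\epsilon_n \downarrow 0$ chosen so that $|x - a/n| < \Psi(k_n n)$ still holds; the notational cost is minor and the counting step at the end is unchanged. I expect no genuine obstacle in executing this plan.
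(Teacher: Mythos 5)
Your proposal is correct and follows essentially the same route as the paper: the easy containment from $\Psi\leq\overline\Psi$, then clearing denominators with a maximizing $k_n$ and noting that infinitely many distinct denominators $k_n n$ arise (your divisibility--pigeonhole count is a harmless variant of the paper's observation that $k_n n\geq n$ is unbounded). The only cosmetic difference is the attainment issue: the paper disposes of it up front by noting that if $\Psi(kn)\not\to 0$ for some $n$ then $W(\Psi)=W(\overline\Psi)=[0,1]$, whereas you patch it with a near-maximizer, which also works.
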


\begin{proof}
  First, we can safely assume that for each $n$, we have
  $\lim_{k\to\infty}\Psi(kn) = 0$, for otherwise we would have
  $W(\Psi) = W(\overline\Psi) = [0,1]$.  

  Since $\Psi \leq \overline\Psi$, we have
  $W(\Psi)\subseteq W(\overline\Psi)$, and we only have to show the
  other containment. Suppose that $x \in W(\overline\Psi)$, meaning
  that there are infinitely many $(a,n)\in \ZZ\times\NN$ satisfying
  \begin{equation}\label{eq:theabove}
    \abs*{x - \frac{a}{n}} < \overline\Psi(n). 
  \end{equation}
  For each $n$, let $k:=k_n$ denote the integer maximizing
  $\Psi(kn)$. Such an integer is guaranteed to exist, by the
  assumption made in the previous paragraph. Now, for each $(a,n)$
  satisfying~(\ref{eq:theabove}), we have 
  \begin{equation}\label{eq:morerecent}
    \abs*{x - \frac{k_na}{k_nn}} < \Psi(k_n n). 
  \end{equation}
  Notice that $\set{k_n n}_{n\in\mathcal N}$ is an infinite set for
  any subsequence $\mathcal N\subseteq \NN$, because $k_n n$ can be
  made arbitrarily large by taking $n$ large. In particular, the
  infinitude of solutions $(a,n)$ to~(\ref{eq:theabove}) gives rise to
  an infinitude of solutions $(k_na, k_n n)$
  to~(\ref{eq:morerecent}). Therefore,
  $W(\overline\Psi)\subseteq W(\Psi)$ and we are done.
\end{proof}

\begin{proof}[Proof of~(\ref{eq:weakiffcatlin})]
  Now, suppose Catlin's Conjecture is true, and suppose we are given a
  function $\Psi : \NN\to\RR_{\geq 0}$ such that
  $\sum \varphi(n)\Psi(n)$ diverges. Clearly, we also have that
  $\sum\varphi(n)\overline\Psi(n)$ diverges.  By the divergence part
  of Catlin's Conjecture, we get $\lambda(W(\Psi))=1$, which
  establishes Conjecture~\ref{conj:weakdsc}.

  On the other hand, suppose Conjecture~\ref{conj:weakdsc} is true,
  and let $\Psi: \NN\to\RR_{\geq 0}$ be such that
  $\sum\varphi(n)\overline\Psi(n)$ diverges. Then, by assumption, we
  have $\lambda(W(\overline\Psi))=1$, which by
  Lemma~\ref{lem:overlinePsi} implies that $\lambda(W(\Psi))=1$.
\end{proof}

Now~(\ref{eq:ifrandomthencatlin}) follows
from~(\ref{eq:randomimpliesweak}) and~(\ref{eq:weakiffcatlin}).\qed

\subsection*{Acknowledgments}
\label{acknowledgments}

I thank Christoph Aistleitner for bringing Catlin's Conjecture to my
attention, and Ayan Chakraborty for his comments on the
previous version of this paper.

This was the first project that I started after my daughter, Luna, was
born. I would like to acknowledge my wife, Yeni, for her partnership.



\bibliographystyle{plain}


\end{document}